\documentclass[10pt]{article}
\usepackage{geometry}
\usepackage[utf8]{inputenc}
\usepackage{amssymb}
\usepackage{amsmath}
\usepackage{amsthm}
\usepackage{graphicx}
\usepackage{fancyhdr}
\usepackage{hyperref}
\usepackage{ytableau}
\usepackage{bbding}
\usepackage{xcolor}
\usepackage{authblk} 

\usepackage[backend=biber, sorting=none]{biblatex}
\usepackage[english]{babel}
\addbibresource{main-2}

\usepackage{tikz}
\usepackage{tikz-cd}
\usetikzlibrary{cd}

\theoremstyle{plain}
\newtheorem{thm}{Theorem}[section]
 
\newtheorem{cor}[thm]{Corollary}
\newtheorem{corollary}[thm]{Corollary}
\newtheorem{prop}[thm]{Proposition}

\theoremstyle{definition}

\newtheorem{definition}[thm]{Definition}
\newtheorem{lem}[thm]{Lemma}

\newtheorem{example}[thm]{Example}

\newtheorem{remark}[thm]{Remark}



\renewcommand{\phi}{\varphi}

\newcommand{\frameop}{\Phi\Phi^\ast}
\newcommand{\gram}{\Phi^\ast\Phi}

\newcommand{\bC}{\mathbb{C}}
\newcommand{\bF}{\mathbb{F}}
\newcommand{\bR}{\mathbb{R}}
\newcommand{\bN}{\mathbb{N}}
\newcommand{\cH}{\mathcal{H}}
\newcommand{\rar}{\rightarrow}
\newcommand{\ip}[2]{\left<#1,\,#2\right>}
\newcommand{\norm}[1]{\left\Vert#1\right\Vert}

\newcommand{\absip}[2]{\left\vert\left<#1,\,#2\right>\right\vert}

\newcommand{\rk}{\operatorname{rank}}
\newcommand{\bc}{\operatorname{Boxcomp}}
\newcommand{\Rar}{\Rightarrow}

\newcommand{\beq}{\begin{equation}}
\newcommand{\eeq}{\end{equation}}

\setcounter{MaxMatrixCols}{20} 

\title{Connections Between Frames with Rational Eigensteps and Semistandard Young Tableaux}

\author[1]{Emily J.\ King}
\author[1]{Kylie Schnoor\thanks{Corresponding author: \texttt{kylie.schnoor@colostate.edu}}}
\affil[1]{Department of Mathematics, Colorado State University, Fort Collins, CO}

\date{\today}

\begin{document}
\maketitle
\begin{abstract}
In this paper, we explore a correspondence between frames with rational eigensteps and semistandard Young tableaux (SSYT), via the relation assigning a Gelfand-Tsetlin pattern to a frame via the frame's eigensteps. We will identify how certain key structures in SSYTs correlate with particular frame properties. For example, the weight of an SSYT yields the sequence of norms of any compatible frame. Additionally, this correspondence leads to a novel way to construct the eigensteps of a frame coming solely from tableaux. This is an alternative to the Top Kill algorithm which may be viewed as a combinatorial reinterpretation of the algorithm. We further employ other combinatorial techniques such as the boxcomp method to generate a ``complement" SSYT. On the frame side, this corresponds to a tight frame's Naimark complement as well as to a generalization of the Naimark complement for non-tight frames. Further research points to an analysis of equiangular tight frames and their corresponding tableaux, as well as using more combinatorial operations to further analyze frames. \textbf{Keywords:} frame theory, eigensteps, semistandard Young tableaux, Gelfand-Tsetlin pattern, Naimark complement
\end{abstract}

\section{Introduction}

Frames can be seen as generalizations of bases that have many applications in wireless communication, compressed sensing, machine learning, and beyond \cite{CKentf, GVKapplication, strohmer2003grassmannianframesapplicationscoding,papyan2020prevalence,FoRa13}. On the other hand, semistandard Young tableaux are combinatorial objects utilized often in symmetric function theory, algebraic geometry, and representation theory \cite{FultonTableaux, Stanleycombinatorics, GRgeneralized, CrystalBases, RepTheory}. The goal of this work is to present a connection between the two objects and further leverage combinatorial theory to  understand frames.

Frames may be associated to Gelfand-Tsetlin (GT) patterns via their eigensteps~\cite{cfmconstructingfiniteframes, cfpconstructingselfadjoint,HPpolytope}, while integer GT patterns are well-known in combinatorics to be equivalent to semistandard Young tableaux (SSYT) (see, e.g., \cite{Stanleycombinatorics}).  In this paper, we introduce the connection between frames and SSYTs as filtered through GT patterns and explore how combinatorial manipulations of SSYT can be interpreted on the frame side, capitalizing on finite frame theory while also drawing on combinatorial influences from \cite{FultonTableaux, GTpatterns}. Tableaux theory has been leveraged in different ways in prior work in frame theory: the rich connections between tableaux and irreducible representations of $S_n$ have been leveraged in order to construct equi-isoclinic tight fusion frames (EITFFs) as well as to provide a proof for the Grigoriev-Laurent lower bound \cite{fickus2022note, kunisky2024spectrum,}. Additionally, standard Young tableaux and the Hook length formula have been used to further analyze the symmetry of EITFFs and equi-chordal tight fusion frames (ECTFFs) \cite{fickus2024equi}. We hope this work encourages more exploration of the connections between these objects.

In Section~\ref{frametheorybackground} we discuss the relevant frame theory background. In Section~\ref{combinatorialbackground} we discuss the relevant combinatorics and introduce Gelfand-Tsetlin patterns and Young tableaux.  We further discuss eigensteps in Section~\ref{eigensteps}. In Section~\ref{semistandardframes} we establish new results that relate semistandard tableaux to the eigensteps of a frame. Specifically, we analyze how the previous results can be extended to the Naimark complement of a tight frame in Section~\ref{naimarkcomplements} and further generalize this correspondence to non-tight frames in Section~\ref{generalizednaimark}.

\subsection{Frame Theory Background}\label{frametheorybackground}
We begin with an introduction to finite frame theory. See, e.g., \cite{Waldron18}, for a general resource about finite frames.  Formally, we have:

\begin{definition}\label{framedef}
Let $\cH$ be a finite-dimensional Hilbert space. A sequence $\Phi =  \left(\phi_i\right)_{i=1}^n$ in $\cH$ is called a \emph{frame} over $\cH$ if there exist $0 < A \leq B$ such that the following holds for all $x \in \cH$:
\[
    A\norm{x}^2 \leq \sum_{i=1}^n \absip{x}{\phi_i}^2 \leq B\norm{x}^2.
\]

\end{definition}

We call $A$ and $B$ \emph{frame bounds}, with $A$ being a \emph{lower frame bound} and $B$ an \emph{upper bound}. If $A$ is the largest possible lower bound and $B$ the smallest possible upper bound, we call them \emph{optimal}. If the optimal frame bounds are equal to each other, we call the frame \emph{tight (with frame bound $A$)}. Over finite-dimensional Hilbert spaces, any spanning set is a frame; so, the optimal bounds provide key information. 

The $\phi_i$ are the \emph{frame vectors}. There are several operators from frame theory associated to sequences of vectors.  After fixing a basis, we may represent the frame vectors as elements of $\bF^d$ for $\bF = \bR$ or $\bC$.  By slight abuse of notation, we use $\Phi$ to denote both the $d \times n$ \emph{synthesis matrix}
\[
\begin{bmatrix}\varphi_1 & \varphi_2 & \hdots & \varphi_n \end{bmatrix}
\]
which maps $\bF^n \rar \bF^d$ and the frame itself.  Thus, we will also call such a frame a \emph{$(d,n)$-frame}.
 
We call $\Phi\Phi^\ast$ the \emph{frame operator} and $\Phi^\ast\Phi$ the \emph{Gram matrix}. As a function, the frame operator maps 
\[
x \mapsto \sum_{i=1}^n \ip{x}{\varphi_i}\varphi_i = (\varphi_i \varphi_i^\ast)x.
\]
On the other hand, the $(i,j)$ entry of the Gram matrix is $\ip{\varphi_j}{\varphi_i}$.

A frame is tight if and only if $\Phi\Phi^\ast = AI$ where $A$ is the frame bound and $I$ is the $d \times d$ identity matrix. 

Note that if $\Phi$ is tight with frame bound $A$, then the Gram matrix $G = \Phi^\ast \Phi$ is a self-adjoint operator satisfying
\begin{align*}
G&= (\Phi^\ast \Phi)^2= (\Phi^\ast \Phi)(\Phi^\ast \Phi) \\
&= \Phi^\ast (\Phi\Phi^\ast) \Phi =\Phi^\ast A I \Phi\\
&= A \Phi^\ast \Phi=AG.
\end{align*}
That is, $G$ is a scaled orthogonal projection. Conversely, any scaled orthogonal projection is the Gram matrix of a tight frame by spectral theory. So, a scaled orthogonal projection with diagonal entries all equal to $w>0$ must be a Gram matrix of a tight frame with all vectors having $w$ as the square of their norm.  Such a frame is called an \emph{equal norm tight frame}.

It is often very useful to discuss and analyze the eigenvalues of the frame operator and the Gram matrix. For example, the frame operator and Gram matrix have the same non-zero eigenvalues since they are the products of two matrices in both orders.  A common trick to analyze spectra in frame theory is to use the cyclic invariance of the trace, as shown in the standard proof of the following lemma.

\begin{lem}\label{lem:sumnorm}
Let $\Phi=(\varphi_i)_{i=1}^n$ be a frame. If $\lambda_1 \dots \lambda_d$ are the eigenvalues of $\Phi\Phi^\ast$, then $$ \sum^d_{j=1} \lambda_j = \sum^n_{i=1} ||\phi_i||^2.$$
So, if $\Phi$ is an equal-norm tight frame with frame bound $A$ and norm squared $w$, then $A=nw/d$.
\end{lem}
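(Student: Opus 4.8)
The plan is to exploit the cyclic invariance of the trace, exactly as the lead-in sentence advertises. The key observation is that the sum of the eigenvalues of any square matrix equals its trace, so $\sum_{j=1}^d \lambda_j = \tr(\Phi\Phi^\ast)$. Since $\Phi\Phi^\ast$ and $\Phi^\ast\Phi$ are products of the same two matrices in opposite orders, cyclic invariance gives $\tr(\Phi\Phi^\ast) = \tr(\Phi^\ast\Phi)$. The remaining task is to recognize the right-hand side as $\sum_{i=1}^n \norm{\phi_i}^2$.

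First I would write $\tr(\Phi^\ast\Phi)$ as the sum of its diagonal entries. Recall from the background above that the $(i,j)$ entry of the Gram matrix is $\ip{\varphi_j}{\varphi_i}$, so the $(i,i)$ diagonal entry is $\ip{\varphi_i}{\varphi_i} = \norm{\varphi_i}^2$. Summing over $i$ from $1$ to $n$ yields
\[
\tr(\Phi^\ast\Phi) = \sum_{i=1}^n \ip{\varphi_i}{\varphi_i} = \sum_{i=1}^n \norm{\varphi_i}^2,
\]
which combined with the two displayed equalities above establishes the first claim $\sum_{j=1}^d \lambda_j = \sum_{i=1}^n \norm{\phi_i}^2$.

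For the second statement, I would specialize to the equal-norm tight case. If $\Phi$ is tight with frame bound $A$, then $\Phi\Phi^\ast = AI$ on the $d$-dimensional space $\cH$, so every eigenvalue $\lambda_j$ equals $A$ and the left-hand sum is $dA$. If in addition the frame is equal-norm with $\norm{\varphi_i}^2 = w$ for all $i$, then the right-hand sum is $nw$. Equating $dA = nw$ and solving gives $A = nw/d$, as desired.

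I do not anticipate a genuine obstacle here, since the result is a standard trace identity; the only point requiring minor care is keeping straight that the frame operator $\Phi\Phi^\ast$ acts on $\bF^d$ (hence contributes $d$ eigenvalues summed on the left) while the Gram matrix $\Phi^\ast\Phi$ is $n \times n$ (hence contributes $n$ diagonal norm-squared terms on the right), with the trace identity bridging the two. One should also implicitly note that $\Phi\Phi^\ast$ is self-adjoint and positive semidefinite, so its trace genuinely equals the sum of its (real, nonnegative) eigenvalues.
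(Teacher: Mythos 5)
Your proof is correct and follows essentially the same route as the paper's: equate $\sum_j \lambda_j$ with $\tr(\Phi\Phi^\ast)$, use $\tr(\Phi\Phi^\ast)=\tr(\Phi^\ast\Phi)$, and read off the diagonal of the Gram matrix as the squared norms. You additionally spell out the equal-norm tight case ($dA = nw$), which the paper leaves implicit; this is a harmless and welcome elaboration.
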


\begin{proof}
Since $\Phi\Phi^\ast$ and $\Phi^\ast\Phi$ have the same nonzero eigenvalues, they will have the same trace. Thus we have: 
$$\sum^d_{j=1} \lambda_j = \text{tr}(\Phi\Phi^\ast) = \text{tr}(\Phi^\ast\Phi) = \sum^n_{i=1} ||\phi_i||^2 $$
\end{proof}

\subsection{Combinatorial Background}\label{combinatorialbackground}
In this section we discuss the necessary combinatorial background. We begin by introducing Gelfand-Tsetlin patterns and then move on to tableaux theory. Gelfand-Tsetlin patterns and semistandard Young tableaux have a close relationship with each other; both objects have strong ties to representation theory \cite{CrystalBases, RepTheory, HKGTcrystals}. Below is the standard definition of a Gelfand-Tsetlin pattern found in \cite{Stanleycombinatorics, GTpatterns, HKGTcrystals}. \\

\begin{definition}\label{GTdef}
A \emph{Gelfand-Tsetlin (GT) pattern} is a triangular or parallelogram arrangement of non-negative numbers such that each row is weakly decreasing from left to right, weakly decreasing from the top-left to bottom-right diagonally, and weakly decreasing from the bottom-left to top-right diagonally. In other words for $((\lambda_{i,j})_{j=1})_{i=1}^n$, we have that
\begin{enumerate}
\item $\lambda_{i,j} \geq \lambda_{i,j+1}$
\item $ \lambda_{i+1,j} \geq \lambda_{i,j} \geq \lambda_{i+1,j+1}$ for $1 \leq j \leq i \leq n-1$
\end{enumerate}
whenever $i,j$ are defined.

When all $\lambda_{i,j}$ are integers, it is an \emph{integer GT pattern}.  Unless otherwise noted, GT patterns in this paper are integer GT patterns. The \emph{weight vector} of a GT pattern is $w = (w_1, w_2, \dots, w_n)$ where $w_i$ is defined as the difference in row sums between row $i$ and row $i-1$, where row $0$ is defined to be all zeros.
\end{definition}

Note that the upper bound on the range of $j$ is deliberately not defined. This is due to the fact that a GT pattern can be either a triangular shape (in which the number of entries in row $i$ will be $i$) or a parallelogram shape (in which the size of each sequence will be constant).

\begin{example}
An example of a triangle GT pattern is
\[
\begin{matrix}
4 & & 3 & & 3 & & 1 \\
& 4 & & 3 & & 2 & \\ 
& & 4 & & 2 & & \\ 
& & & 4 & & & 
\end{matrix}
\]
with weight vector $(4, 2, 3, 2)$
An example of a parallelogram GT pattern is: 
\[
\begin{matrix}
4 & & 3 & & 2 & & 1 & & \\
&4 & & 2 & & 2 & & 1 & &  \\
&&2 & & 2 & & 1 & & 1 & \\
&&&2 & & 2 & & 1 & & 1 
\end{matrix}
\]
with weight vector $(6,0,5,4)$
\end{example}

Additionally, note that a variety of GT patterns are equivalent simply by 0-padding the top row. For example, it is easy to increase the number of rows on top of a pattern by duplicating the nonzero entries in the top row (and then padding by an extra zero if it is a triangle-shaped pattern). The following GT patterns are equivalent in that sense:
\begin{align*}
\begin{matrix}
4 & & 3 & & 3 & & 1 \\
& 4 & & 3 & & 2 & \\ 
& & 4 & & 2 & & \\ 
& & & 4 & & & 
\end{matrix}
&& 
\begin{matrix}
4& & 3 & & 3 & & 1 & & 0\\
&4 & & 3 & & 3 & & 1 \\
&& 4 & & 3 & & 2 & \\ 
&& & 4 & & 2 & & \\ 
&& & & 4 & & & 
\end{matrix}
\end{align*}

In the correspondence between GT patterns and frames we will see Proposition~\ref{prop:clear2GT}, the modification used to generate the right-hand GT pattern above corresponds to appending a zero vector to a frame. Thus, in most cases, we will be dealing with the ``smallest possible'' GT pattern with the given structure.
Continuing on, another combinatorial object we will extensively use is tableaux. We begin by defining one key component:

\begin{definition}
A \emph{Young diagram} is a finite collection of boxes in left-justified rows with row lengths in non-increasing order. Listing these row lengths in non-increasing order corresponds to a integer partition often denoted $\lambda = (\lambda_1, \lambda_2, \dots, \lambda_n)$.
\end{definition}

\begin{remark}
We will be using French notation in order to denote Young diagrams and later semistandard Young tableaux. This means that the Young diagram will be bottom-left justified, instead of top-left justified (English notation) as is seen \cite{FultonTableaux}. In other words, gravity exists.
\end{remark}

\begin{definition}
A \emph{semistandard Young tableau (SSYT)} is a filling of a Young diagram with non-negative integers such that the entries of the Young diagram are weakly increasing row-wise and strictly increasing column-wise.

Given a \emph{proper filling} (a filling that satisfies the above conditions) of a SSYT using numbers $[n]=\{1, 2, \hdots, n\}$, the \emph{weight vector} is the vector $w = (w_1, w_2, \dots w_n)$ such that $w_i$ is the number of $i$'s in the tableau.

\end{definition}

\begin{example}\label{ex:SSYT}
An example of an SSYT of shape $(4,3,3,1)$ with weight vector $(4,2,3,2)$ is
\[
\begin{ytableau}
4 \\ 
3 & 3 & 4 \\
2 & 2 & 3 \\ 
1 & 1 & 1 & 1 \\ 
\end{ytableau}.
\]
\end{example}

\begin{definition}
Consider two partitions $\lambda = (\lambda_1, \lambda_2, \dots, \lambda_n)$ and $\mu = (\mu_1, \mu_2, \dots, \mu_m)$ such that $m \leq n$ and $\mu_i \leq \lambda_i$ for all $i$. A \emph{skew Young diagram} has shape $\lambda / \mu$ and results from removing the partition $\mu$ from $\lambda$. A filling of a skew tableau that is still weakly increasing in rows and strictly increasing in columns is called a \emph{skew SSYT}. 
\end{definition}

Note that a tableau that is not a skew shape is often called a \emph{straight shape tableau}. This difference will be used extensively throughout this paper. It will explicitly be stated when a tableau is of skew shape. If no distinction is made, it is safe to assume the tableau is straight shape. 

\begin{example}
Take the SSYT from Example~\ref{ex:SSYT} and skew it by the partition $(3,2,1)$, then the resulting skew tableau is:  
\[
\begin{ytableau}
4 \\ 
\none & 3 & 4 \\
\none & \none & 3 \\ 
\none & \none & \none & 1 \\ 
\end{ytableau}
\]
\end{example}

\begin{thm}\label{gt2ssyt}
There is a one-to-one correspondence between straight-shaped SSYT of shape $\lambda = (\lambda_1, \lambda_2, \dots, \lambda_d)$ and weight vector $w= (w_1, w_2, \dots, w_n) $ and triangle-shaped GT patterns with top row $\lambda$ and the same weight vector.
\end{thm}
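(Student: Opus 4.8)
The plan is to exhibit the standard bijection between SSYT and GT patterns explicitly and verify that it respects the stated data (shape $\leftrightarrow$ top row, and matching weight vectors). Given an SSYT $T$ of shape $\lambda$ with entries in $[n]$, for each $k \in \{0,1,\dots,n\}$ I would let $T^{(k)}$ denote the sub-tableau consisting of all cells of $T$ whose entry is at most $k$, and let $\lambda^{(k)}$ be its shape. First I would check that each $\lambda^{(k)}$ is genuinely a partition: because the rows of $T$ weakly increase and the columns strictly increase, the cells with entry $\le k$ are downward- and leftward-closed (in French notation), so they form a Young subdiagram whose row lengths are weakly decreasing as one moves up. I would then stack these shapes as the rows of a triangular array, with $\lambda^{(n)} = \lambda$ on top and $\lambda^{(1)}$ (a single part) at the bottom, padding each $\lambda^{(k)}$ with zeros to length $k$.

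The key structural fact is that for each $k$ the skew shape $\lambda^{(k)}/\lambda^{(k-1)}$ consists exactly of the cells of $T$ containing the value $k$. Since columns of $T$ are strictly increasing, no two $k$'s lie in the same column, so this skew shape is a horizontal strip. I would then invoke (or prove in a line) the standard equivalence: $\lambda^{(k-1)} \subseteq \lambda^{(k)}$ with $\lambda^{(k)}/\lambda^{(k-1)}$ a horizontal strip holds if and only if the interlacing inequalities $\lambda^{(k)}_i \ge \lambda^{(k-1)}_i \ge \lambda^{(k)}_{i+1}$ hold for all $i$. These are precisely conditions (1) and (2) of Definition~\ref{GTdef}, so the triangular array is a valid integer GT pattern with top row $\lambda$.

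For the weight, the number of $k$'s in $T$ equals $|\lambda^{(k)}| - |\lambda^{(k-1)}|$, which is exactly the difference of consecutive row sums that defines the GT weight vector; hence both objects carry the same weight $w$. To finish, I would construct the inverse map: given a triangular GT pattern with rows $\lambda^{(n)}, \dots, \lambda^{(1)}$ (and $\lambda^{(0)}$ the empty shape), fill each cell of $\lambda^{(k)}/\lambda^{(k-1)}$ with the value $k$. The horizontal-strip consequence of the interlacing inequalities guarantees that each column receives distinct, upward-increasing values (strict column increase), while the containment $\lambda^{(k-1)} \subseteq \lambda^{(k)}$ forces weak row increase, so the filling is a legitimate SSYT. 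Since the two constructions are manifestly inverse to one another, the correspondence is a bijection.

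The main obstacle I anticipate is not the existence of the bijection, which is classical, but making the orientation conventions line up cleanly. Because this paper uses French (gravity) notation for tableaux and the conditions in Definition~\ref{GTdef} are phrased with specific decreasing directions, I would need to be careful that ``cells with entry $\le k$'' translate into the correct diagonal of the triangle and that the interlacing inequalities come out with their inequalities pointing the intended way rather than reversed. A secondary bookkeeping point is the zero-padding: the shape $\lambda$ has $d$ parts while the weight lives in $[n]$, so I would invoke the padding equivalence noted after Definition~\ref{GTdef} to regard $\lambda$ as extended by zeros to length $n$, ensuring the triangle has exactly $n$ rows and the weight vector the correct length.
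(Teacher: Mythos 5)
Your proposal is correct and follows essentially the same route as the paper's proof: both construct the GT rows as the shapes $\lambda^{(k)}$ of the sub-tableaux of entries at most $k$, verify the interlacing inequalities via the horizontal-strip condition, match the weight vectors through differences of row sums, and invert by filling each skew layer $\lambda^{(k)}/\lambda^{(k-1)}$ with $k$'s. The only cosmetic difference is that the paper interleaves a worked example and argues the interlacing inequalities directly from the placement rules for the entry $k$ rather than citing the horizontal-strip equivalence as a named lemma.
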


This is a well-known fact (see, e.g., \cite{Stanleycombinatorics}), but we will prove it here, due to this bijection being used extensively throughout the paper. 

\begin{proof}
We first comment that the weight vector is counting different quantities of the SSYT versus the triangle GT pattern. Part of the proof is showing that these quantities are related through the bijection. To better explain each step of the proof, we consider the following explicit straight shape SSYT and triangle GT pattern which will be equivalent under the isomorphism:
\[
\begin{ytableau}
4 \\ 
3 & 3 & 4 \\
2 & 2 & 3 \\ 
1 & 1 & 1 & 1 \\ 
\end{ytableau} \quad\qquad \raisebox{-15pt}{\textrm{and}} \qquad\qquad \raisebox{-15pt}{$\begin{matrix}
4 & & 3 & & 3 & & 1 \\
& 4 & & 3 & & 2 & \\ 
& & 4 & & 2 & & \\ 
& & & 4 & & & 
\end{matrix}$.}
\]
Note that by counting the number of times each element appears in the SSYT, we see that it has weight $(4,2,3,2)$. By taking differences of row sums, we see that the GT pattern has weight $(4,2,3,2)$.  Further, note that the shape of the SSYT is $(4,3,3,1)$, which is precisely the top row of the GT pattern.

We begin by proving that a straight shape SSYT $T$ with shape $\lambda = (\lambda_1, \lambda_2, \dots, \lambda_d)$ and weight vector $w= (w_1, w_2, \dots, w_n) $ yields a triangle GT pattern with the same weight vector. The key idea is to iteratively build rows of the GT pattern, where the entries of each row will be the shape of a Young diagram determined by some of the entries of the SSYT.

Let the partition $\lambda_1:= (\lambda_{1,1})$ be associated with the $w_1$ boxes containing $1$, i.e., $\lambda_{1,1}=w_1$. This partition will contain at most 1 part because the tableau is semistandard and a $1$ cannot be placed directly above another $1$. 

In our explicit example, this step corresponds to
\[
\begin{ytableau} 
1 & 1 & 1 & 1 \\ 
\end{ytableau} \quad\qquad \raisebox{5pt}{$\Rar$} \qquad \raisebox{5pt}{$\begin{matrix}
& & & 4 & & & 
\end{matrix}$.}
\]

Now consider the partition $\lambda_2 := (\lambda_{2,1}, \lambda_{2,2})$ associated with the $w_1 + w_2$ boxes containing $1$ and $2$. This partition will contain at most 2 parts for the same reason as above. Since $\lambda_2$ contains both boxes filled with entries $1$ and $2$, we necessarily have that $\lambda_{2,1} \geq \lambda_{1,1} \geq \lambda_{2,2}$. This is because $\lambda_{1,1}$ can be extended by $2$'s, i.e. $\lambda_{2,1} \geq \lambda_{1,1}$. Additionally, a $2$ can be placed above a $1$, but not another $2$ yielding $\lambda_{1,1} \geq \lambda_{2,2}$. 

In explicit example, we can also observe that $4 \geq 4 \geq 2$, since the bottom row in the SSYT does not contain any 2's. 
\[
\begin{ytableau}
2 & 2   \\ 
1 & 1 & 1 & 1 \\ 
\end{ytableau} \quad\qquad \raisebox{-5pt}{$\Rar$} \qquad\qquad \raisebox{-5pt}{$\begin{matrix}
& & 4 & & 2 & & \\ 
& & & 4 & & & 
\end{matrix}$.}
\]

We continue this process where $\lambda_i := (\lambda_{i,1}, \dots \lambda_{i,i})$ is the partition that contains the entries $\{1, 2, \dots, i\}$. Due to the semistandard condition, we are guaranteed that $\lambda_i$ has at most $i$ parts. Additionally, since each $i$ can extend a row containing at most $w_{i-1}$ $i-1$'s but not be placed above another $i$, we have that for $j \leq i$
\[
\lambda_{i,j} \geq \lambda_{i-1,j} \geq \lambda_{i,j+1}.
\]
Clearly, since each $\lambda_i$ is a partition, we necessarily have that $\lambda_{i,j} \geq \lambda_{i,j+1}$ by definition. Thus, we have a GT pattern as desired. 

This is best illustrated in the next step of our explicit example. We can define $\lambda_3 = (4,3,2)$ as the partition that contains all entries $\leq 3$: the first row of 4 boxes contains only 1's, the second row contains 2's and 3's and in the third row, we only consider the first 2 boxes containing a 3. Observe that we have $4 \geq 4 \geq 3 \geq 2 \geq 2$. Since $\lambda_3$ is a partition and is weakly decreasing by definition, we necessarily have $4 \geq 3 \geq 2$. The same is true for $\lambda_2$ as well. Additionally, since the first row is not extended by anything, but the second row is extended by a 3, we have $4 \geq 4 \geq 3 \geq 2 \geq 2$ as desired.
\[
\begin{ytableau} 
3 & 3  \\
2 & 2 & 3 \\ 
1 & 1 & 1 & 1 \\ 
\end{ytableau} \quad\qquad \raisebox{-10pt}{$\Rar$} \qquad\qquad \raisebox{-10pt}{$\begin{matrix}
& 4 & & 3 & & 2 & \\ 
& & 4 & & 2 & & \\ 
& & & 4 & & & 
\end{matrix}$.}
\]
The remaining step will yield the original SSYT and GT pattern as the shape of the SSYT is $(4,3,3,1)$, which is precisely the top row of the GT pattern.

Now we prove that a triangle GT pattern yields a straight shape SSYT. The key idea will be to iteratively build a larger and larger Young diagram by appending skew diagrams based on the shape parameters from each row of the GT pattern.  When the Young diagram is expanded, the new boxes will be filled with copies of the iteration number.

We first fill a Young diagram of shape $(\lambda_{1,j})_{j=1}^1$ with $1$'s. In other words, this becomes the SSYT of shape $(\lambda_{1,j})_{j=1}^1$ with weight vector $(w_1)$. This is guaranteed to be semistandard since there is at most 1 part to the partition, which yields a weakly increasing row. Looking at our explicit example, we have
\[
\raisebox{5pt}{$\begin{matrix}
& & & 4 & & & 
\end{matrix}$}
\quad\qquad \raisebox{5pt}{$\Rar$} \qquad \begin{ytableau} 
1 & 1 & 1 & 1 \\ 
\end{ytableau}\enskip.
\]

Next, we fill the skew Young diagram $(\lambda_{2,j})_{j=1}^2 / (\lambda_{1,j})_{j=1}^1$ with $2$'s and chain it with the SSYT $(\lambda_{1,j})_{j=1}^1$ filled with $1$'s. In other words, concatenate the two tableaux together like a puzzle piece. This chain is guaranteed to be semistandard since $(\lambda_{2,j})_{j=1}^2 / (\lambda_{1,j})_{j=1}^1$ will extend each $\lambda_{1,j}$ by $2$'s and thus the row will remain weakly increasing. Additionally, since the skew tableau is semistandard, we are guaranteed that the columns will remain strictly increasing. Notice that this implies that $(\lambda_{2,j})_{j=1}^2$ has weight $(w_1, w_2)$.

In the explicit example, we have that the skew shape created by the first 2 rows is $(4,2)/(4)$. This is the skew shape with 2 boxes in the 2nd row. Chaining this together with the tableaux of shape $(4)$ yields a SSYT of shape $(4,2)$ with all 1's ones in the first row and all 2's in the second row. 
\[\raisebox{-5pt}{$\begin{matrix}
& & 4 & & 2 & & \\ 
& & & 4 & & & 
\end{matrix}$}
 \quad\qquad \raisebox{-5pt}{$\Rar$} \qquad\qquad 
 \begin{ytableau}
{} & {}   \\ 
\none & \none & \none & \none \\ 
\end{ytableau}\quad\qquad \raisebox{-5pt}{$\Rar$} \qquad\qquad  \begin{ytableau}
2 & 2   \\ 
1 & 1 & 1 & 1 \\ 
\end{ytableau} \enskip.
\]

We continue this process such that the skewed Young diagram $(\lambda_{i+1,j})_{j=1}^{i+1} / (\lambda_{i,j})_{j=1}^i$ with $w_{i+1}$'s chained with previous diagram yields a tableau of shape  $(\lambda_{i+1,j})_{j=1}^{i+1}$ with weight vector $(w_1,  \dots w_{i+1})$. The resulting tableaux will be clearly semistandard because the interlacing inequalities of the GT pattern ensure that none of the skew shape have a vertical strip greater than 1 box high, and the concatenation process does not violate the weakly increasing property of the rows. Thus, we have an SSYT as desired.

Looking at the remaining two iterations applied to the explicit example, we have
\[
\raisebox{-10pt}{$\begin{matrix}
& 4 & & 3 & & 2 & \\ 
& & 4 & & 2 & & \\ 
& & & 4 & & & 
\end{matrix}$}\quad\qquad \raisebox{-10pt}{$\Rar$} \qquad\qquad
\begin{ytableau} 
{} & {}  \\
\none & \none & {} \\ 
\none & \none & \none & \none \\ 
\end{ytableau} \quad\qquad \raisebox{-10pt}{$\Rar$} \qquad\qquad \begin{ytableau} 
3 & 3  \\
2 & 2 & 3 \\ 
1 & 1 & 1 & 1 \\ 
\end{ytableau}
\]
and
\[
\raisebox{-15pt}{$\begin{matrix}
4 & & 3 & & 3 & & 1 \\
& 4 & & 3 & & 2 & \\ 
& & 4 & & 2 & & \\ 
& & & 4 & & & 
\end{matrix}$}\quad\qquad \raisebox{-15pt}{$\Rar$} \qquad\qquad
\begin{ytableau} 
{}  \\
\none & \none & {}  \\
\none & \none & \none \\ 
\none & \none & \none & \none \\ 
\end{ytableau} \quad\qquad \raisebox{-15pt}{$\Rar$} \qquad\qquad \begin{ytableau} 
4 \\
3 & 3 & 4 \\
2 & 2 & 3 \\ 
1 & 1 & 1 & 1 \\ 
\end{ytableau} \enskip \raisebox{-15pt}{.}
\]

Notice in this process that the skew shape formed by $(\lambda_{i+1,j})_{j=1}^{i+1} / (\lambda_{i,j})_{j=1}^i$ and filled with $w_{i+1}$ $i+1$'s is another way to represent the weight of a GT pattern. Skewing by adjacent rows and filling with a specified content is exactly taking the difference between those two adjacent rows. This yields the weight vector being the same in both the GT pattern and SSYT.
\end{proof}

This proof serves as an algorithm for transitioning between integer GT patterns and SSYT for the remainder of the paper.

\begin{thm}\label{gt2ssyt2}
There is a one-to-one correspondence between skew shape SSYT of shape $\lambda / \mu$ for some integer partitions $\lambda$ and $\mu$ and parallelogram GT patterns with top row $\lambda$ and bottom row $\mu$ and the same weight vector. 
\end{thm}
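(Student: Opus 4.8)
The plan is to adapt the iterative construction from the proof of Theorem~\ref{gt2ssyt} almost verbatim, replacing the empty base partition by $\mu$. The essential new feature is that, because the boxes of $\mu$ have been deleted, the column-height bound that forced the straight-shape pattern to be triangular no longer applies, so every row of the pattern can carry up to $d=\ell(\lambda)$ parts and the pattern becomes a parallelogram with constant row length.

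For the forward map, given a skew SSYT $T$ of shape $\lambda/\mu$ with entries in $[n]$, I would for each $i\in\{0,1,\dots,n\}$ let $\nu^{i}$ be the partition whose Young diagram is $\mu$ together with all boxes of $T$ holding an entry $\le i$, so that $\nu^{0}=\mu$ and $\nu^{n}=\lambda$. After $0$-padding each $\nu^{i}$ to length $d$ (possible since $\mu\subseteq\lambda$ forces $\ell(\mu)\le\ell(\lambda)=d$), I stack $\nu^{0},\dots,\nu^{n}$ from bottom to top. I would then check that the interlacing inequalities $\nu^{i}_{j}\ge\nu^{i-1}_{j}\ge\nu^{i}_{j+1}$ of Definition~\ref{GTdef} hold: the left inequality is just $\nu^{i-1}\subseteq\nu^{i}$, and the right inequality is precisely the statement that the skew strip $\nu^{i}/\nu^{i-1}$ (the boxes of $T$ equal to $i$) is a horizontal strip, which is forced by strict increase down the columns of $T$. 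The weight vectors agree because $w_i=\#\{\,i\text{'s in }T\,\}=|\nu^{i}|-|\nu^{i-1}|$ is exactly the difference of consecutive row sums, and by construction the bottom row is $\mu$ and the top row is $\lambda$.

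For the reverse map, given a parallelogram GT pattern with rows $\nu^{0}=\mu,\dots,\nu^{n}=\lambda$, I would fill each skew shape $\nu^{i}/\nu^{i-1}$ with copies of $i$ and chain these strips exactly as in the proof of Theorem~\ref{gt2ssyt}. Rows remain weakly increasing because each strip extends the previous shape to the right, and the GT inequality $\nu^{i-1}_{j}\ge\nu^{i}_{j+1}$ guarantees that no column receives two boxes from the same strip, so columns stay strictly increasing; note that no constraint is created at the base of a column, since the boxes of $\mu$ are absent and the lowest filled box in each column is the smallest entry appearing there. This produces a skew SSYT of shape $\lambda/\mu$ with the prescribed weight. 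Finally I would observe that the two constructions are mutually inverse — reading off the boxes with entry $\le i$ undoes filling $\nu^{i}/\nu^{i-1}$ with $i$'s — which establishes the bijection.

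The step I expect to require the most care, rather than being genuinely hard, is verifying the equivalence between the column-strict condition on $T$ and the right-hand interlacing inequality $\nu^{i-1}_{j}\ge\nu^{i}_{j+1}$ in the presence of a nonempty base $\mu$: one must confirm that deleting $\mu$ introduces no inequality beyond the parallelogram GT conditions and, conversely, that the GT conditions alone never force an illegal filling at the $\mu$-boundary. Since this is the same horizontal-strip bookkeeping already carried out for straight shapes, the argument goes through with $\mu$ in place of the empty partition.
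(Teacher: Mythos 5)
Your proposal is correct and follows essentially the same route as the paper, which proves this theorem by adapting the iterative strip-by-strip construction from Theorem~\ref{gt2ssyt} with the base partition $\mu$ in place of the empty partition and the corresponding index shift between GT rows and tableau entries. Your write-up is more detailed than the paper's (which states the adaptation in one sentence), but the decomposition into horizontal strips $\nu^{i}/\nu^{i-1}$, the verification of interlacing, and the weight bookkeeping are all the same argument.
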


\begin{proof}
The proof is the same as above except when looking at the Young diagram $(\lambda_{i+1,j})_{j=1}^2 / (\lambda_{1,j})_{j=1}^1 $, the resulting shape is filled with $i$'s instead of $i+1$'s.
\end{proof}

\begin{example}\label{exGT2SSYT}

Consider the following parallelogram GT pattern
\[
\begin{matrix}
4 & & 3 & & 2 & & 1 & & & &\\
& 4 & & 2 & & 2 & & 1 & & & \\
& & 2 & & 2 & & 1 & & 0 & &\\
& & & 2 & & 2 & & 1 & & 0 &\\ 
& & & & 2 & & 1 & & 1 & & 0 \\
\end{matrix}
\]
that corresponds to the skew shape SSYT
\[
\begin{ytableau}
3 \\
\none & 3 \\ 
\none & 1 & 4 \\
\none & \none & 3 & 3 \\
\end{ytableau}
\]
The process is essentially the same for the skew SSYT as it is for the straight shape one. However, instead of looking at the $i$th and $(i-1)$th row in the GT pattern, you look at the $(i+1)$th and $i$th row instead. Further notice that there are no $2$ entries in the skew SSYT. This corresponds to the 2nd and 3rd row of the GT pattern being equal. Thus, when you skew the 3rd row by the 2nd row, you end up with the empty partition. 
\end{example}

\subsection{Eigensteps}\label{eigensteps}
The eigensteps of a frame allow us to tie together the combinatorial objects introduced in the previous section and frame theory and may be viewed as a generalization of the Schur-Horn theorem \cite{Schurbackground, Hornbackground} relating existence of a self-adjoint matrix to interlacing inequalities between the spectrum and the diagonal elements. 

Recall that for a $(d,n)$-frame $\Phi = [ \phi_1, \phi_2, \dots, \phi_n]$, we have both the frame operator $\frameop$ and the Gram matrix $\gram$. From this we introduce the notion of partial sequences of vectors: $\Phi_i := ( \phi_{j})^i_{j=1}$. In other words for every $i\in [n]$, the $i$th partial sequence of a frame is the first $i$ vectors or the first $i$ columns of the synthesis matrix. From these partial sequences, it still makes sense to talk about the associated frame operator and Gram matrix.  We will use double indexing sequences $(\lambda_{i,j})_{j=1}^d$ to denote the spectra of the $i$th partial sum of the frame operator. Note that Lemma~\ref{lem:sumnorm} holds for every partial sequence. Thus for any $i \in [n]$ we have:
\begin{equation}\label{eqn:parsums}
    \sum_{j=1}^d \lambda_{i,j} = \sum_{i=1}^n \norm{\phi_i}^2
\end{equation}
Further notice that as $i$ increases (i.e., as we add more vectors to each partial sum), $\Phi_i\Phi_i^\ast$ remains a $d \times d$ matrix while $\Phi_i^\ast\Phi_i$ increases in size with each vector added. This is because the number of rows in each partial sequence remains the same, but as $i$ increases, then so do the number of columns of $\Phi_i$.  Thus, the spectra of the Gram matrices of the partial sequence are indexed as $(\lambda_{i,j})_{j=1}^i$ but also satisfy~\eqref{eqn:parsums}.

To introduce further terminology from \cite{cfmconstructingfiniteframes}, we say that one sequence $( \gamma_i)^n_{i=1}$ \emph{interlaces} another sequence $( \beta_i)^n_{i=1}$ if $$ \beta_n \leq \gamma_n \leq \beta_{n-1} \leq \gamma_{n-1} \leq \dots \leq \beta_1 \leq \gamma_1.$$ In fact, the sequences do not need to be the same size (cf.\ \cite{cfpconstructingselfadjoint}): a sequence $ ( \alpha_j)^{n-1}_{j=1}$ interlaces a sequence $( \nu_j)^n_{j=1}$ if $\nu_{j+1} \leq \alpha_j \leq \nu_j$ for all $j= 1, \dots, n-1$. We write $(\beta_i)^n_{i=1} \sqsubseteq (\gamma_i)^n_{i=1}$ and $(\alpha_j)^{n-1}_{j=1} \sqsubseteq (\nu_j)^n_{j=1}$, respectively.

Utilizing the above notation, we can introduce the following definitions for the sequence of eigensteps of a frame~\cite{cfmconstructingfiniteframes,cfpconstructingselfadjoint}.

\begin{definition}\label{outerdef}
Let $(\lambda_j)^d_{j=1}$ and $(w_k)^n_{k=1}$ be nonnegative, nonincreasing sequences. Corresponding \emph{outer eigensteps} are a sequence of sequences $((\lambda_{i,j})^d_{j=1})^n_{i=1}$ which satisfies the following properties: 
\begin{enumerate}
\item $\lambda_{0,j}=0$ for every $j\in [d]$,
\item $\lambda_{n,j}=\lambda_j$ for every $j\in [d]$,
\item $(\lambda_{i-1,j})^d_{j=1} \sqsubseteq ( \lambda_{i,j})^d_{j=1}$ for every $i \in [n]$, and
\item $\sum^d_{j=1} \lambda_{i,j} = \sum^i_{k=1}w_k$ for every $i \in [n]$. 
\end{enumerate}
\end{definition}

Leveraging Schur-Horn~\cite{Schurbackground, Hornbackground}, it is shown in \cite{cfmconstructingfiniteframes} that given any $\Phi = (\phi_i)^n_{i=1}$ with $\frameop$ that has spectrum $(\lambda_j)^d_{j=1}$ and squared norms $ w_i = ||\phi_i||^2$ for all $i$ will generate a sequence of outer eigensteps as sequences of spectra associated with the partial sums of the frame operator. Notice that the fourth condition preserves the trace condition for each partial sum. In addition to looking at the partial sum of $\frameop$, we can equivalently look at $\gram$ as well.

\begin{definition}\label{innerdef}
Let $(\lambda_j)^n_{j=1}$ and $(w_k)^n_{k=1}$ be nonnegative, nonincreasing sequences. Corresponding \emph{inner eigensteps} are a sequence of sequences $((\lambda_{i,j})^n_{j=1})^n_{i=1}$ which satisfies the following properties: 
\begin{enumerate}
\item $\lambda_{i,j}=\lambda_j$ for every $j\in [d]$,
\item $(\lambda_{i-1,j})^{i-1}_{j=1} \sqsubseteq (\lambda_{i,j})^i_{j=1}$ for every $i = 2, \dots, n$,
\item $\sum^d_{j=1} \lambda_{i,j} =  \sum^i_{k=1}w_k$ for every $i\in [n]$, and
\item $\lambda_{i,j} = 0$ for $j > d$.
\end{enumerate}
\end{definition}

Notice that in this definition, the interlacing condition involves sequences of different lengths. Similar to above, work from \cite{cfpconstructingselfadjoint} proves that the partial sequence of $\gram$ generates a sequence of inner eigensteps. The names outer and inner eignesteps originate from the difference in using the outer product to generate the partial sums of the frame operator versus the partial sums of the Gram matrix being comprised of the inner products between the partial sequences of frame vectors.

\begin{prop}
The outer eigensteps of a frame yields a unique set of inner eigensteps. Similarly, the converse is true: the inner eigensteps of a frame yields a unique set of outer eigensteps.
\end{prop}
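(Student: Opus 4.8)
The plan is to exploit the single linear-algebraic fact underlying the whole correspondence: for the $i$th partial sequence $\Phi_i$, the $d \times d$ partial frame operator $\Phi_i\Phi_i^\ast$ and the $i \times i$ partial Gram matrix $\Phi_i^\ast\Phi_i$ share exactly the same nonzero eigenvalues, with the same multiplicities, since they are the two products of $\Phi_i$ and $\Phi_i^\ast$ in the two orders. This is the same observation used in the proof of Lemma~\ref{lem:sumnorm}. Consequently the $i$th row of the outer eigensteps, $(\lambda_{i,j})_{j=1}^d$, and the $i$th row of the inner eigensteps, $(\lambda_{i,j})_{j=1}^n$, are two weakly decreasing sequences that agree on their nonzero part and differ only in how many trailing zeros are appended.

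First I would record that $r_i := \rk(\Phi_i)$ satisfies $r_i \le \min(i,d)$, so each outer row has at most $\min(i,d)$ nonzero entries and each inner row has at most $d$ nonzero entries, matching inner-eigenstep condition (4). I would then define the passage from outer to inner row by row: given $(\lambda_{i,j})_{j=1}^d$, strip its trailing zeros to recover the multiset of nonzero eigenvalues and re-pad with zeros to length $n$. Because a weakly decreasing sequence of fixed length is determined by its multiset of nonzero entries together with that length, this map is well defined, and its inverse (strip to the nonzero part, re-pad to length $d$) sends inner eigensteps back to outer eigensteps. This deterministic strip-and-pad bijection is precisely the uniqueness asserted in both directions.

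It then remains to check that the constructed sequence actually satisfies the defining axioms of the target object. The boundary and trace conditions transfer immediately, since stripping and re-padding zeros changes neither the multiset of nonzero values nor the row sums, and the terminal row still carries the spectrum $(\lambda_j)$. For the interlacing axioms I would argue directly from the frame: $\Phi_{i-1}^\ast\Phi_{i-1}$ is the leading $(i-1)\times(i-1)$ principal submatrix of $\Phi_i^\ast\Phi_i$, so Cauchy interlacing yields the unequal-length interlacing $(\lambda_{i-1,j})_{j=1}^{i-1} \sqsubseteq (\lambda_{i,j})_{j=1}^{i}$ required of inner eigensteps, while $\Phi_i\Phi_i^\ast = \Phi_{i-1}\Phi_{i-1}^\ast + \varphi_i\varphi_i^\ast$ is a positive rank-one update, which yields the equal-length interlacing $(\lambda_{i-1,j})_{j=1}^{d} \sqsubseteq (\lambda_{i,j})_{j=1}^{d}$ required of outer eigensteps.

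The step I expect to be the main obstacle is reconciling the two interlacing conventions. If instead of appealing to the frame one wants a purely combinatorial argument, showing directly that equal-length interlacing of the length-$d$ outer rows is equivalent to unequal-length interlacing of their nonzero truncations, then one must track the zeros carefully through both definitions, using that $r_i - r_{i-1} \in \{0,1\}$ because each new vector raises the rank by at most one. Treating the case $r_i = r_{i-1}$, where a newly created zero must be absorbed into the shorter sequence, separately from the case $r_i = r_{i-1}+1$ is the delicate point; the frame-theoretic route via Cauchy interlacing sidesteps it, which is why I would present that argument as the primary one.
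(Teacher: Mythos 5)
Your proof is correct and takes essentially the same approach the paper intends: the proposition is stated there without proof, and the only justification given later is that one passes between inner and outer eigensteps by ``padding (or taking away) the appropriate number of $0$'s,'' which is exactly your strip-and-pad bijection, here properly grounded in the fact that $\Phi_i\Phi_i^\ast$ and $\Phi_i^\ast\Phi_i$ share their nonzero spectra. One trivial indexing remark: the $i$th inner row is naturally of length $i$ (matching the triangular GT-pattern shape) rather than length $n$, though the paper's own Definition~\ref{innerdef} is ambiguous on this point and it does not affect your argument.
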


The inner eigensteps of a frame always form a not-necessarily-integer triangular GT pattern, while the outer eigensteps yield a not-necessarily integer parallelogram GT pattern.

Eigensteps are an important characterization arising from the Schur-Horn theorem; a self-adjoint matrix exists if and only if the eigensteps satisfy the interlacing inequalities defined above \cite{Schurbackground, Hornbackground}. The next question to ask after such a matrix exists is whether it can be constructed. Theorems 2 \& 7 from \cite{cfmconstructingfiniteframes} provide algorithms for constructing a finite frame based on a sequences of outer eigensteps (Theorem 7 can be seen as a revised version to Theorem 2 that is easier to implement). From the proposition above, it is easy to go between inner and outer eigensteps, simply pad (or take away) by the appropriate number of 0's. Thus, if given an allowable sequence of eigensteps that fits either definition above, it is possible to construct a finite frame. 

However, the question still remains on how to construct an allowable eigenstep pattern. One way to construct such a sequence of eigensteps would be to leverage the interlacing inequalities in order to algebraically determine a correct sequence. Constructing such a sequence is also a combinatorial problem, and we can leverage well-studied combinatorial objects, such as GT patterns and SSYTs, in order to construct allowable sequences. In the next section, we describe the correspondence between these two combinatorial objects and the sequences of eigensteps.

\section{Clearing Frames}\label{semistandardframes}
In order to use the machinery of SSYTs to construct and analyze frames, we need to classify frames that yield integer GT patterns.  This leads us to the following definition.

\begin{definition}
We call a frame $\Phi = (\phi_i)^n_{i=1}$ a \emph{clearable frame} if its outer eigensteps (and consequently inner eigensteps) can be scaled by a single number $\ell$ such that the sequences of eigensteps are nonnegative integers. In this case $\ell$ is called a \emph{clearing constant} and $(\sqrt{\ell}\phi_i)^n_{i=1}$ a \emph{cleared frame}.
\end{definition}

\begin{prop}\label{prop:clear2GT}
The eigensteps of a clearable frame $\Phi$ are integer GT patterns. The outer eigensteps of $\Phi$ correspond to parallelogram GT patterns, and the inner eigensteps correspond to triangular GT patterns.
\end{prop}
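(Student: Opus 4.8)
The plan is to show that the eigenstep arrays of $\Phi$, once cleared, satisfy each of the defining conditions of an integer GT pattern in Definition~\ref{GTdef}, and that the ambient matrix sizes force the claimed shapes. First I would reduce to the integer case: by the cited constructions the partial frame operators $\Phi_i\Phi_i^\ast$ and partial Gram matrices $\Phi_i^\ast\Phi_i$ generate sequences satisfying Definition~\ref{outerdef} and Definition~\ref{innerdef}, respectively. Since $\Phi$ is clearable, multiplying every entry by the clearing constant $\ell>0$ replaces each $\Phi_i\Phi_i^\ast$ by $\ell\,\Phi_i\Phi_i^\ast$ (and likewise for the Gram matrices), which scales every eigenvalue by $\ell$ and hence turns the whole array into nonnegative integers. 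Because $\ell>0$, this scaling preserves every inequality among the entries, so it genuinely upgrades the rational array to an integer one without disturbing any order relation. Nonnegativity is automatic throughout, since $\Phi_i\Phi_i^\ast$ and $\Phi_i^\ast\Phi_i$ are positive semidefinite. Thus it suffices to verify conditions (1) and (2) of Definition~\ref{GTdef} for the cleared arrays and to read off the shapes.

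For the outer eigensteps I would argue as follows. Every $\Phi_i\Phi_i^\ast$ is a $d\times d$ matrix, so each row of the array has exactly $d$ entries; this constant width is precisely the parallelogram shape. Listing each spectrum in nonincreasing order gives $\lambda_{i,1}\ge\lambda_{i,2}\ge\cdots\ge\lambda_{i,d}$, which is condition (1). For condition (2), I would unwind the interlacing clause (3) of Definition~\ref{outerdef}: the relation $(\lambda_{i-1,j})_{j=1}^d\sqsubseteq(\lambda_{i,j})_{j=1}^d$ means exactly $\lambda_{i,j+1}\le\lambda_{i-1,j}\le\lambda_{i,j}$ for the relevant $j$. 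Identifying the GT row index with the partial-sequence index $i$---so that the full frame operator's spectrum occupies the top row and the zero row $\lambda_{0,j}=0$ sits at the bottom---this is verbatim the GT interlacing $\lambda_{i,j}\ge\lambda_{i-1,j}\ge\lambda_{i,j+1}$. Hence the outer eigensteps form a parallelogram integer GT pattern.

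The inner eigensteps are handled symmetrically, the only difference being the growing width. Here $\Phi_i^\ast\Phi_i$ is $i\times i$, so row $i$ carries $i$ entries and the array widens by one box per step, i.e.\ it is triangular, with the full Gram matrix ($i=n$) on top. Sorting each spectrum again yields condition (1), and the unequal-length interlacing clause (2) of Definition~\ref{innerdef}, namely $(\lambda_{i-1,j})_{j=1}^{i-1}\sqsubseteq(\lambda_{i,j})_{j=1}^{i}$, unwinds to $\lambda_{i,j+1}\le\lambda_{i-1,j}\le\lambda_{i,j}$ for $1\le j\le i-1$, which is exactly the triangular form of GT condition (2). When $n>d$ the rows past the $d$-th acquire forced trailing zeros by clause (4) of Definition~\ref{innerdef}; these are legitimate nonnegative integer entries and leave the shape triangular, so nothing breaks.

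I expect the only real work to be bookkeeping rather than mathematics: the substance is matching the two interlacing conventions and pinning down the orientation so that ``row $i$ dominates row $i-1$'' in the eigenstep definitions becomes ``the upper row dominates the lower row'' in the GT convention, together with reading the two shapes off the matrix dimensions ($d\times d$ versus $i\times i$). The subtlety most likely to trip up a careful reader is the direction of $\sqsubseteq$ and the placement of the zero bottom row, so I would state the index identification explicitly before invoking it, and I would flag the trailing-zero case $n>d$ so that the triangular claim is not mistaken for a failure of the pattern to close up.
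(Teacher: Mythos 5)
Your proposal is correct and follows essentially the same route as the paper's proof: unwinding Definitions~\ref{outerdef} and~\ref{innerdef} to match conditions (1) and (2) of Definition~\ref{GTdef}, and reading the parallelogram versus triangular shapes off the constant versus growing row lengths. You are somewhat more careful than the paper in spelling out the scaling by the clearing constant $\ell$ and the index orientation, but this is added detail rather than a different argument.
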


\begin{proof}
First notice that for a sequence of normalized eigensteps, $((\lambda_{i,j})^i_{j=1})^n_{i=1}$, each of the $(\lambda_{i,j})^i_{j=1}$ is nonnegative and nonincreasing by definition. This is exactly the first criteria of Definition~\ref{GTdef}. Additionally, condition (ii) in Definition~\ref{GTdef} is precisely the interlacing condition for outer and inner eigensteps in Definition~\ref{outerdef} and Definition~\ref{innerdef}. Notice that since the length of the sequences in the outer eigensteps are the same length, they will are parallelogram GT patterns, while the inner eigensteps will be triangular GT patterns since the sequences of inner eigensteps increase in length.
\end{proof}

\begin{example}
This is an example built upon one in \cite{cfpconstructingselfadjoint}. Consider the finite unit-norm tight frame below:
\[
\Phi = \begin{bmatrix}
1 & \frac{2}{3} & -\frac{1}{\sqrt{6}} & -\frac{1}{6} & \frac{1}{6} \\
0 & \frac{\sqrt{5}}{3} & \frac{\sqrt{5}}{\sqrt{6}} & \frac{\sqrt{5}}{\sqrt{6}} & -\frac{\sqrt{5}}{\sqrt{6}} \\
0 &  0 & 0  & \frac{\sqrt{5}}{\sqrt{6}}  & \frac{\sqrt{5}}{\sqrt{6}}
\end{bmatrix},
\]
which has corresponding outer and inner eigensteps
\begin{align*}
\begin{matrix}
\frac{5}{3} & & \frac{5}{3} & & \frac{5}{3} \\
& \frac{5}{3} & & \frac{5}{3} & & \frac{2}{3} \\
& & \frac{5}{3} & & \frac{4}{3} & & 0 \\ 
& & & \frac{5}{3} & & \frac{1}{3} & & 0 \\ 
& & & & 1 & & 0 & & 0 \\ 
\end{matrix}
&&
\begin{matrix}
\frac{5}{3} & & \frac{5}{3} & & \frac{5}{3} & & 0 & & 0  \\
& \frac{5}{3} & & \frac{5}{3} & & \frac{2}{3} & & 0  \\
& & \frac{5}{3} & & \frac{4}{3} & & 0 \\ 
& & & \frac{5}{3} & & \frac{1}{3} \\ 
& & & & 1 \\ 
\end{matrix},
\end{align*}
which in turn can be cleared to integer GT patterns:
\begin{align*}
\begin{matrix}
5 & & 5 & & 5 \\
& 5 & & 5 & & 2 \\
& & 5 & & 4 & & 0 \\ 
& & & 5 & & 1 & & 0 \\ 
& & & & 3 & & 0 & & 0 \\ 
\end{matrix}
&&
\begin{matrix}
5 & & 5 & & 5 & & 0 & & 0  \\
& 5 & & 5 & & 2 & & 0  \\
& & 5 & & 4 & & 0 \\ 
& & & 5 & & 1 & & \\ 
& & & & 3 & \\ 
\end{matrix}
\end{align*}
Notice that the value that clears the eigensteps of this frame is 3, which is precisely the same as the dimension of $\Phi$. 
\end{example}
It is a fact that certain collections of (not necessarily integral) Gelfand-Tsetlin patterns form polytopes.  The vertices of these polytopes are clearable \cite{deloera2004vertices}.

\begin{corollary}\label{eig2ssyt}
The outer eigensteps of a clearable frame correspond to skew SSYT, and the inner eigensteps correspond to straight shape SSYT
\end{corollary}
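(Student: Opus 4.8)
The plan is to obtain the correspondence as a direct composition of results already established in the excerpt, with no new machinery required. First I would unpack the definition of a clearable frame: there is a clearing constant $\ell$ such that scaling every eigenstep by $\ell$ produces nonnegative integers. Fixing such an $\ell$, the cleared outer and inner eigensteps become honest integer GT patterns, and Proposition~\ref{prop:clear2GT} tells us exactly which shapes arise: the cleared outer eigensteps form a parallelogram integer GT pattern, while the cleared inner eigensteps form a triangular integer GT pattern.

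Next I would feed these integer GT patterns into the two bijection theorems. The triangular integer GT pattern coming from the inner eigensteps is precisely the kind of object handled by Theorem~\ref{gt2ssyt}, which returns a straight-shape SSYT whose shape equals the top row of the pattern and whose weight vector agrees with that of the pattern. Symmetrically, the parallelogram integer GT pattern coming from the outer eigensteps is the input to Theorem~\ref{gt2ssyt2}, which returns a skew SSYT of shape $\lambda/\mu$, where $\lambda$ is the top row and $\mu$ the bottom row, once again preserving the weight vector. Composing the map ``clear the eigensteps by $\ell$'' with the relevant bijection then yields the two asserted correspondences.

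I would also record that the two halves of the statement are compatible with one another. By the proposition relating inner and outer eigensteps, one passes between the triangular and parallelogram patterns of a single frame by padding or deleting the appropriate zeros in each row; on the tableau side this is exactly the relationship between the straight-shape SSYT and the skew SSYT built from the same frame data, so both correspondences describe the same underlying frame consistently rather than two unrelated objects.

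The only genuine subtlety, and the point I would be most careful about, is well-definedness with respect to the clearing constant: $\ell$ is not unique, since any positive integer multiple of a clearing constant also clears the frame, and rescaling a GT pattern rescales its weight vector, hence produces a different (but predictably related) SSYT. I would resolve this either by fixing the minimal clearing constant or by phrasing the correspondence relative to a chosen $\ell$, and then confirm that the weight-vector identifications in Theorems~\ref{gt2ssyt} and~\ref{gt2ssyt2} survive the scaling. They do, because clearing multiplies every row sum, and therefore every weight-vector entry, by the same factor $\ell$, so the shape of the tableau is unchanged and only the content is uniformly rescaled in a controlled way.
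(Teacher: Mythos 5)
Your proposal is correct and follows essentially the same route as the paper, which simply invokes Theorems~\ref{gt2ssyt} and~\ref{gt2ssyt2} (together with Proposition~\ref{prop:clear2GT}) and declares the result immediate. One small inaccuracy in your closing aside: replacing $\ell$ by a multiple rescales the \emph{top row} of the GT pattern as well, so the shape of the resulting tableau does change along with the weights — the correspondence is indeed relative to a chosen clearing constant, as the paper's Theorem~\ref{bigguy} later makes explicit, but it is not shape-preserving across different choices of $\ell$.
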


\begin{proof}
This is trivial due to Theorems~\ref{gt2ssyt} and~\ref{gt2ssyt2}.
\end{proof}

\begin{example}
Continuing the example from above, we have that the corresponding straight shape and skew shape tableaux are: 
\begin{align*}
\begin{matrix}
5 & & 5 & & 5 \\
& 5 & & 5 & & 2 \\
& & 5 & & 4 & & 0 \\ 
& & & 5 & & 1 & & 0 \\ 
& & & & 3 & & 0 & & 0 \\ 
\end{matrix}
&& \raisebox{5pt}{$\longleftrightarrow$} &&
\raisebox{15pt}{$\begin{ytableau}
3 & 3 & 4 & 4 & 4 \\
1 & 2 & 2 & 2 & 3 \\
\none & \none & \none & 1 & 1 
\end{ytableau}$}
\end{align*}
\\
\begin{align*}
\begin{matrix}
5 & & 5 & & 5 & & 0 & & 0  \\
& 5 & & 5 & & 2 & & 0  \\
& & 5 & & 4 & & 0 \\ 
& & & 5 & & 1 & & \\ 
& & & & 3 & \\ 
\end{matrix}
&&\longleftrightarrow&& 
\raisebox{10pt}{$\begin{ytableau}
4 & 4 & 5 & 5 & 5 \\
2 & 3 & 3 & 3 & 4 \\
1 & 1 & 1 & 2 & 2 
\end{ytableau}$}
\end{align*}
\end{example}

Since the spectra of the frame operator is a zero-padded version of the spectra of the Gram, we can think of the corresponding parallelogram GT pattern as a ``zero-padded" version of the respective triangle GT pattern. Notice that because the inner and outer eigensteps are associated with the same frame this induces a rather trivial bijection between straight shape SSYT and skew shape SSYT. Although the following result is trivial as a result on SSYT, it is the first example in this paper of result solely concerning SSYT having an interpretation in the frame theory.

\begin{corollary}\label{straight2skew}
A straight SSYT with shape $\lambda = (\lambda_1, \lambda_2, \dots, \lambda_d)$ and filled with $[n]$ and weights $(w_1, w_2, \dots, w_n)$ is in bijective correspondence with a skew SSYT with shape $\lambda / (\lambda_{1,1})$, where $\lambda_{1,1}$ is the associated partition in the straight SSYT of boxes filled with only $1$, with entries $[n-1]$ and weights $(w_2, w_3,\dots, w_n)$.
\end{corollary}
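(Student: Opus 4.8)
The plan is to give an explicit combinatorial bijection --- ``erase the $1$'s and shift every label down by one'' --- and then observe that this is precisely the map one gets by routing a single frame through its inner and outer eigensteps, which is the sense in which the preceding discussion calls it induced. The first step is to pin down the cells of the straight SSYT $T$ filled with $1$. In French notation, column-strictness forces every $1$ into the bottom cell of its column, hence into the first row, and weak increase along that row forces the $1$'s to the left; so the $1$-cells are exactly the single-row partition $\lambda_{1,1}=(w_1)$, and since the bottom row has length $\lambda_1$ we get $w_1\le\lambda_1$, i.e.\ $(w_1)\subseteq\lambda$.

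Next I would define the forward map $\Psi$ on a straight SSYT $T$ of shape $\lambda$, content $[n]$, and weight $(w_1,\dots,w_n)$ by deleting those $w_1$ cells and replacing each surviving entry $k$ by $k-1$. I would then verify $\Psi(T)$ is a genuine skew SSYT: removing the leftmost $w_1$ cells of the bottom row leaves the skew shape $\lambda/(w_1)=\lambda/\lambda_{1,1}$; the surviving entries lie in $\{2,\dots,n\}$ and remain weakly increasing along rows and strictly increasing up columns; and the order-preserving relabeling $k\mapsto k-1$ preserves both inequalities while moving the content to $[n-1]$. Counting gives weight $(w_2,\dots,w_n)$, since the number of $j$'s in $\Psi(T)$ equals the number of $(j+1)$'s in $T$.

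For the inverse I would take a skew SSYT $S$ of shape $\lambda/\lambda_{1,1}$ with content $[n-1]$, apply $k\mapsto k+1$, and refill the $w_1$ deleted cells with $1$'s. The only nontrivial check is that reattaching this row of $1$'s keeps the tableau semistandard: rows stay weakly increasing because a block of $1$'s is followed only by entries $\ge 2$, and columns stay strictly increasing because any cell directly above a new $1$ carries an entry $\ge 2>1$. Since deletion-then-refilling and the mutually inverse shifts undo one another, $\Psi$ and this map are inverse bijections, proving the claim.

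Finally, to justify the word ``induced'' I would compose the bijection of Theorem~\ref{gt2ssyt} (straight SSYT $\leftrightarrow$ triangular inner pattern), the inner-to-outer eigenstep correspondence, and the bijection of Theorem~\ref{gt2ssyt2} (parallelogram outer pattern $\leftrightarrow$ skew SSYT), exactly as packaged in Corollary~\ref{eig2ssyt}. Only two identifications are needed: the bottom row of the outer pattern is $(w_1,0,\dots,0)$, being the spectrum of the rank-one operator $\phi_1\phi_1^\ast$, so the skew shape is $\lambda/(w_1)$; and Theorem~\ref{gt2ssyt2} fills consecutive row-differences with $i$'s where Theorem~\ref{gt2ssyt} uses $(i+1)$'s, which is exactly the shift $k\mapsto k-1$. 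I expect no real obstacle here; the only points that demand care are the boundary check that reattaching the row of $1$'s (equivalently, the zero-padding relating inner and outer eigensteps) does not violate column-strictness, and keeping the two distinct meanings of ``weight'' --- entry counts in a tableau versus successive row-sum differences in a GT pattern --- aligned when matching the weight vectors.
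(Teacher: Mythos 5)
Your proposal is correct and follows essentially the same route as the paper's proof: delete the row of $1$'s to obtain the skew shape $\lambda/(w_1)$, shift labels down by one, and invert by shifting up and refilling the inner corner with $1$'s, checking that column-strictness and row-weakness survive. Your additional verification details and the closing paragraph tying the bijection back to the inner/outer eigenstep correspondence go slightly beyond what the paper writes out, but the underlying argument is the same.
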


\begin{proof}
For the forward direction, note that removing the boxes containing $1$'s in the tableau obviously produces a skew tableau of shape $\lambda / \lambda_{1,1}$, where $\lambda_{1,1}$ is the shape of the partition of boxes containing the entry $1$. Relabeling the remaining entries by $i - 1$ yields the desired tableau. For the backwards direction, add 1 to all the entries of the tableaux. Fill the remaining inner corner with boxes with $1$'s. The following tableau is guaranteed to be semistandard, since the partition's empty space only had a $1$'s part and thus the columns will remain strictly increasing and the rows weakly.
\end{proof}

Just as we would like to gather the eigensteps from the frame, we would like to be able to go the other way as well, that is, construct a frame based on a sequence of eigensteps. There are two methods for doing so proposed in \cite{cfmconstructingfiniteframes}. Theorems 2 \& 7 from \cite{cfmconstructingfiniteframes} provide algorithms for constructing a finite frame based on a sequences of outer eigensteps. 

In \cite{cfmconstructingfiniteframes}, the difficulty of finding a valid sequences of eigensteps is discussed. They attempt to remedy this issue with the Top Kill algorithm. However, in the case of clearable frames, we propose an alternative algorithm leveraging \ref{eig2ssyt}. Thus, by looking at ways to fill certain SSYT, we can appropriately choose valid sequences of eigensteps that will correspond to frames.

\begin{thm}\label{bigguy} 
An SSYT of shape $\lambda = (\lambda_1, \lambda_2, \dots, \lambda_d)$, weights $w = (w_1, w_2, \dots, w_n)$, and specified filling along with a clearing constant $\ell$ corresponds to a clearable frame's ($\Phi$) outer eigensteps such that the eigenvalues of $\Phi\Phi^\ast$ are $\frac{\lambda_1}{\ell}, \frac{\lambda_2}{\ell}, \dots \frac{\lambda_d}{\ell}$  with the norm-squareds of the frame vectors being $\frac{w_1}{\ell}, \frac{w_2}{\ell}, \dots, \frac{w_n}{\ell} $. 
\end{thm}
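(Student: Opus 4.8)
The plan is to assemble the statement out of the bijections already proven, producing the actual frame at the very end via the construction algorithm of \cite{cfmconstructingfiniteframes}. Starting from the given straight-shape SSYT $T$ of shape $\lambda = (\lambda_1, \dots, \lambda_d)$, weight $w = (w_1, \dots, w_n)$, and specified filling, I would first apply Theorem~\ref{gt2ssyt} to pass to the associated triangular integer GT pattern $G$, whose top row is $\lambda$ and whose weight vector (the differences of consecutive row sums) is $w$. The central observation is that, after dividing every entry of $G$ by $\ell$, the rational triangular array $(\lambda_{i,j}/\ell)$ is exactly a sequence of inner eigensteps in the sense of Definition~\ref{innerdef}: condition (ii) of Definition~\ref{GTdef} is verbatim the interlacing requirement $(\lambda_{i-1,j})_{j=1}^{i-1} \sqsubseteq (\lambda_{i,j})_{j=1}^{i}$, the trace condition $\sum_j \lambda_{i,j}/\ell = \sum_{k=1}^i w_k/\ell$ holds because the $i$th row sum of $G$ equals the partial sum $\sum_{k \le i} w_k$ (precisely what the weight-preservation in Theorem~\ref{gt2ssyt} records), and the top row supplies the terminal spectrum $\lambda_j/\ell$.

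Next I would transfer these inner eigensteps to outer eigensteps. By the proposition relating inner and outer eigensteps, zero-padding each row up to length $d$ (and trimming trailing zeros for $i>d$) yields the unique corresponding parallelogram sequence $((\lambda_{i,j}/\ell)_{j=1}^d)_{i=1}^n$ satisfying Definition~\ref{outerdef}; this is legitimate because $\Phi_i\Phi_i^\ast$ and $\Phi_i^\ast\Phi_i$ share their nonzero spectrum, so the two arrays differ only in padding. I would then feed this outer-eigenstep sequence into the construction of \cite{cfmconstructingfiniteframes} (Theorem 2 or 7) to obtain a synthesis matrix $\Phi = [\phi_1, \dots, \phi_n]$ for which each partial frame operator $\Phi_i\Phi_i^\ast$ has spectrum equal to the $i$th row. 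In particular $\frameop$ has eigenvalues $\lambda_1/\ell, \dots, \lambda_d/\ell$, and, applying Lemma~\ref{lem:sumnorm} to consecutive partial sums, $\norm{\phi_i}^2 = \tr(\Phi_i\Phi_i^\ast) - \tr(\Phi_{i-1}\Phi_{i-1}^\ast) = w_i/\ell$. Finally, since rescaling to $(\sqrt{\ell}\phi_i)_{i=1}^n$ restores the integer pattern $G$, the frame is clearable with clearing constant $\ell$, and Corollary~\ref{eig2ssyt} certifies that its eigensteps correspond back to $T$.

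The step I expect to be the main obstacle is verifying that the scaled GT pattern genuinely meets \emph{all} the hypotheses required to invoke \cite{cfmconstructingfiniteframes}, and in particular reconciling the indexing conventions between a combinatorial GT pattern (whose longest row is the terminal shape) and an eigenstep sequence (indexed by the growing partial frame $\Phi_i$). A related subtlety is that the weight $w$ of an SSYT need not be nonincreasing, whereas Definitions~\ref{outerdef} and~\ref{innerdef} are phrased for nonincreasing norm sequences; I would address this by observing that only the interlacing and trace conditions are actually used by the construction, and these hold for the prescribed order of norms regardless of monotonicity, so a frame whose vectors have squared norms $w_1/\ell, \dots, w_n/\ell$ in exactly this order is produced. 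Everything else is bookkeeping that follows directly from Theorems~\ref{gt2ssyt} and~\ref{gt2ssyt2} together with the proposition on inner versus outer eigensteps.
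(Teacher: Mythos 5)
Your proposal is correct and follows essentially the same route as the paper's proof: pass from the SSYT to its integer GT pattern via Theorem~\ref{gt2ssyt}, divide by $\ell$ to obtain valid (inner, hence outer) eigensteps, and invoke Theorem 7 of \cite{cfmconstructingfiniteframes} to construct the frame, with the reverse direction handled by clearing and applying the bijection back. Your write-up is actually more careful than the paper's on the interlacing verification, the inner-to-outer padding step, and the monotonicity caveat on $w$, but the underlying argument is the same.
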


\begin{proof}
From Theorem~\ref{eig2ssyt} we know that we can associate the inner and outer eigensteps of a frame to a straight shape SSYT. We can construct a sequence of $(w_i)^n_{i=1} = w $. By constructing $ (w_i)^n_{i=1}$ in this way, we automatically satisfy the third condition of Definition~\ref{innerdef}, since a tableau and GT pattern have the same weight vector. Thus we can apply Theorem 7 in \cite{cfmconstructingfiniteframes} in order to construct a clearable frame, by dividing all entries in the GT pattern by the ``clearing constant" $\ell$. \\

Going backwards now, we can take a clearable frame and find its outer eigensteps. Since the frame is clearable, we know there exists some constant $\ell$ that turns the eigensteps into an integer GT pattern. Thus, we can construct the associated straight shape SSYT according to Theorems~\ref{gt2ssyt} and~\ref{gt2ssyt2}. Since these correspondences are unique, the frame is then associated with a unique straight shape SSYT with respect to the constant $\ell$.  
\end{proof}

\begin{cor}\label{bigguy2} 
A skew SSYT with shape $\lambda / (w_1)$, where $\lambda = (\lambda_1, \lambda_2, \dots,\lambda_d)$, with weights $w=(w_2, w_3, \dots, w_n)$, and specified filling along with a clearing constant $\ell$ corresponds to a clearable frame's ($\Phi$) inner eigensteps such that the eigenvalues of $\Phi\Phi^\ast$ are $\frac{\lambda_1}{\ell}, \frac{\lambda_2}{\ell}, \dots \frac{\lambda_d}{\ell}$  with the norm-squareds of the frame vectors being $\frac{w_1}{\ell}, \frac{w_2}{\ell}, \dots, \frac{w_n}{\ell} $. 
\end{cor}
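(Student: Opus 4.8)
The plan is to deduce this corollary from Theorem~\ref{bigguy} by using Corollary~\ref{straight2skew} to trade the given skew tableau for a straight one, so that no new frame-theoretic construction is needed. Everything then reduces to matching shapes, weights, entry sets, and the clearing constant across the two bijections.

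First I would apply the backward direction of Corollary~\ref{straight2skew} to the given skew SSYT $S$ of shape $\lambda/(w_1)$, weights $(w_2,\dots,w_n)$, and entries in $[n-1]$. By that corollary, adding $1$ to every entry of $S$ and filling the vacated inner corner (the single bottom row of $w_1$ boxes removed by $\mu=(w_1)$) with $1$'s produces a straight SSYT $T$ of shape $\lambda=(\lambda_1,\dots,\lambda_d)$, entries in $[n]$, and weight $(w_1,w_2,\dots,w_n)$. It is worth noting that $w_1$ reappears precisely as the size of the removed corner, which is exactly the number of $1$'s inserted, so the full weight vector is recovered; moreover this operation only relabels and inserts boxes and does not rescale entries, so the same clearing constant $\ell$ that clears $S$ also clears $T$.

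Next I would feed $T$ and $\ell$ into Theorem~\ref{bigguy}, which yields a clearable frame $\Phi$ whose $\Phi\Phi^\ast$ has eigenvalues $\lambda_1/\ell,\dots,\lambda_d/\ell$ and whose frame vectors have squared norms $w_1/\ell,\dots,w_n/\ell$, exactly as claimed. By Corollary~\ref{eig2ssyt} (via Theorem~\ref{gt2ssyt} and Proposition~\ref{prop:clear2GT}), $T$ is the image of the triangular GT pattern given by the inner eigensteps of $\Phi$ scaled by $\ell$. Since Corollary~\ref{straight2skew} is a bijection and $S$ is the partner of $T$, the original skew tableau $S$ carries the same data and therefore corresponds to these inner eigensteps.

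The one point requiring care — and the main obstacle — is reconciling the two a priori different correspondences attached to a skew tableau. The direct correspondence of Theorem~\ref{gt2ssyt2} with Proposition~\ref{prop:clear2GT} sends a skew SSYT of shape $\lambda/(w_1)$ to a parallelogram GT pattern, i.e.\ the \emph{outer} eigensteps, whereas the statement asserts a correspondence with the \emph{inner} eigensteps. The resolution is that, for a fixed clearable frame, the inner and outer eigensteps determine one another simply by zero-padding each row to length $d$, so both phrases refer to the same underlying frame $\Phi$. I would make this explicit by checking that the skew tableau obtained from $T$ via Corollary~\ref{straight2skew} coincides with the skew tableau obtained from the outer (parallelogram) eigensteps of $\Phi$ via Theorem~\ref{gt2ssyt2}: the shift of labels by one and the removal of the $(w_1)$ corner agree on both sides. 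With this identification in hand, $S$ consistently encodes the eigensteps of the single frame $\Phi$, and the corollary is immediate.
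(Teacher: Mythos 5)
Your proposal is correct and follows exactly the route the paper takes: its entire proof is ``This follows immediately from Theorem~\ref{bigguy} and Corollary~\ref{straight2skew}.'' You have simply filled in the details that the paper leaves implicit, including the worthwhile observation that the inner/outer (triangular/parallelogram) discrepancy is resolved by zero-padding, which the paper glosses over.
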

\begin{proof}
This follows immediately from Theorem~\ref{bigguy} and Corollary~\ref{straight2skew}.
\end{proof}

However, once we narrow down our scope into specific frames, this process gets a little harder. When we place additional conditions on what a frame should look like, we need to do the same with the tableaux. \\ 

The Top Kill Algorithm outlined in \cite{cfpconstructingselfadjoint} provides an algorithm for constructing valid sequences of eigensteps for finite unit norm tight frames (FUNTFs). The work below combinatorially analyzes how to choose valid sequences of eigensteps for clearable frames through analyzing specific conditions on tableaux. One could reinterpret the steps of Top Kill as iteratively generating skew SSYT to fill a tableaux.
We can expand off of Theorem~\ref{bigguy} for finite equal norm frames and FUNTFs. However, we can first describe some properties about the correspondence above. Many of these properties trivially follow from the correspondence, but it is useful to record them all. 

\begin{corollary}\label{cor:SSYTprops}
The following properties about the correspondence from Theorem~\ref{bigguy} between a straight shape SSYT $T$ and cleared frame $\Phi$ hold: 
\begin{enumerate}
\item The number of boxes in the longest row of $T$ is the optimal upper frame bound.  The number of boxes in the shortest row of $T$ is the optimal lower frame bound.
\item The shape of the SSYT is the spectrum of $\frameop$.
\item The number of rows in $T$ is the rank of $\Phi$.
\item The weight of $T$ is the sequence of norm-squares of the vectors.
\item The maximum entry in $T$ is the number of frame vectors in $\Phi$.
\end{enumerate}
\end{corollary}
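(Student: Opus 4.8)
The plan is to read off each of the five properties directly from the correspondence of Theorem~\ref{bigguy}, chained with the SSYT--GT bijection of Theorem~\ref{gt2ssyt} and the defining conditions of the eigensteps. Recall that for the cleared frame the frame operator is $\ell\,\frameop$, whose spectrum is the integer top row of the associated triangular GT pattern; by Theorem~\ref{gt2ssyt} this top row is exactly the shape $\lambda=(\lambda_1,\dots,\lambda_d)$ of $T$, and no division by $\ell$ intrudes. Thus property~2 is an immediate restatement: the shape of $T$ is the terminal spectrum $(\lambda_{n,j})_{j=1}^d$ of the partial frame operators, i.e.\ the spectrum of the (cleared) $\frameop$. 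Property~4 is equally direct: the weight of $T$ equals the weight of the GT pattern, which by the trace condition (condition~(iii) of Definition~\ref{innerdef}, cf.\ Lemma~\ref{lem:sumnorm}) is the successive difference of row sums $\sum_{k=1}^{i}w_k-\sum_{k=1}^{i-1}w_k=w_i=\norm{\phi_i}^2$.

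For property~3 I would observe that the number of rows of the Young diagram of $T$ is the number of nonzero parts of $\lambda$. Since $\lambda$ is the spectrum of $\frameop$ by property~2, and $\rk\Phi$ equals the number of nonzero eigenvalues of $\frameop$ (recall $\frameop$ and $\gram$ share their nonzero eigenvalues), the two counts agree. Property~5 is similar bookkeeping: the entries of $T$ lie in $[n]$, and the maximum entry is the largest index $i$ with $w_i>0$; since every frame vector is nonzero we have $w_n=\norm{\phi_n}^2>0$, so the maximum entry is exactly $n$, the number of frame vectors. (If a zero vector were appended, this would instead pad the GT pattern with a duplicated bottom row and the maximum label would drop accordingly, matching the ``smallest possible'' convention discussed after Definition~\ref{GTdef}.)

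The one property requiring input beyond this dictionary is property~1, which I regard as the main point. Here I would first recall the Rayleigh-quotient identity: for any $x\in\cH$,
\[
\sum_{i=1}^{n}\absip{x}{\phi_i}^2=\ip{\frameop x}{x},
\]
so that, by the spectral theorem for the positive self-adjoint operator $\frameop$, the ratio $\ip{\frameop x}{x}/\norm{x}^2$ ranges exactly over $[\lambda_{\min},\lambda_{\max}]$. Comparing with Definition~\ref{framedef}, the optimal upper and lower frame bounds are therefore the largest and smallest eigenvalues of $\frameop$, namely $\lambda_1$ and $\lambda_d$. Because in French notation the row lengths of the Young diagram are the parts of $\lambda$ in weakly decreasing order, the longest (bottom) row has $\lambda_1$ boxes and the shortest (top) row has $\lambda_d$ boxes, yielding the optimal upper and lower bounds respectively.

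The subtle points to handle carefully are: a genuine spanning $(d,n)$-frame makes $\frameop$ positive definite, so all $d$ parts satisfy $\lambda_1\ge\cdots\ge\lambda_d>0$ and the Young diagram genuinely has $d$ nonempty rows --- this is what lets ``shortest row'' mean $\lambda_d$ rather than a vanishing part; and the whole statement is about the cleared frame, whose integer spectrum is the shape itself, so the extreme-eigenvalue characterization applies without scaling. I expect no step here to be genuinely hard, but correctly invoking the extreme-eigenvalue characterization of the optimal bounds (which was not stated explicitly earlier) and verifying positive-definiteness are where care is needed.
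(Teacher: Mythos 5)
Your proposal is correct and follows essentially the same route as the paper's proof: read off each property from the SSYT--GT dictionary, using the top row of the GT pattern for the shape/spectrum/rank claims and the row-sum differences for the weights. You supply two details the paper leaves implicit --- the Rayleigh-quotient argument that the optimal frame bounds are the extreme eigenvalues of $\frameop$, and the observation that $w_n>0$ is what pins the maximum entry at $n$ --- both of which are sound and strengthen the write-up.
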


\begin{proof} 
To prove 1. \& 2., note that the top row of the corresponding triangular GT pattern is the spectrum of $\gram$, where the non-zero entries both yield the shape of $T$ and the spectrum of $\frameop$. For 3., note that the number of non-zero entries of the top row of the GT pattern both yields the number of rows in the SSYT and $\rk(\gram)=\rk(\Phi)$. To show 4., we see that the $i$th weight $w_i$ of $T$ is exactly the difference between the $i$th and $(i-1)$th row sum of the GT pattern, where the $i$th row sum is the sum of the spectrum of the Gram matrix of the first $i$ vectors, i.e., (as in Lemma~\ref{lem:sumnorm}) the sum of the norm-squares of the first $i$ vectors.  Thus, $w_i = \norm{\varphi_i}^2$.
 Finally, to prove 5., note that the maximum entry in $T$ is determined by the number of rows that the corresponding GT pattern has, which is determined by the number of partial sums of $\Phi$. There are as many partial sums as there are frame vectors. Thus, the number of rows of the GT pattern is determined by the amount of frame vectors. 
\end{proof}

As FUNTFs and more generally equal-norm tight frames are desirable, we have the following theorem.

\begin{thm}\label{ssyt2funtf}
Rectangular SSYTs correspond to clearable tight frames. SSYTs with constant weight correspond to clearable equal-norm frames.  
A straight shape SSYT with shape $\lambda  = \underbrace{(\frac{\ell n}{d}, \dots, \frac{\ell n}{d})}_{d \text{ copies}}$, (i.e., the SSYT is a $d \times \ell n $ rectangle), with weights $\underbrace{(\ell, \dots, \ell)}_{n \text{ copies}}$ corresponds to a clearable finite unit norm tight frame of $n$ vectors in $\bF^d$. 
\end{thm}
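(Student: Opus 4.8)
The plan is to deduce all three assertions from the translation dictionary recorded in Corollary~\ref{cor:SSYTprops}, since Theorem~\ref{bigguy} already furnishes the underlying bijection between a filled SSYT (with a clearing constant $\ell$) and a clearable frame $\Phi$. No frame need be built by hand: it suffices to read the combinatorial hypotheses off as spectral statements about $\Phi\Phi^\ast$ and as statements about the squared norms $\norm{\varphi_i}^2$.

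For the first sentence I would argue both directions from properties (1) and (2). By property (2) the shape of $T$ records the spectrum of the (cleared) frame operator, so $T$ is rectangular exactly when $\lambda_1 = \cdots = \lambda_d$, i.e.\ when every eigenvalue of $\Phi\Phi^\ast$ coincides; since scaling by $\sqrt{\ell}$ preserves tightness, this is equivalent to $\Phi\Phi^\ast$ being a multiple of the identity, which is tightness. (Equivalently, property (1) says the longest and shortest rows give the optimal bounds $B$ and $A$, and a rectangular shape forces $A = B$.) For the second sentence I would invoke property (4): the weight vector of $T$ is the sequence of squared norms of the cleared frame, so $T$ has constant weight precisely when all squared norms agree, i.e.\ when $\Phi$ is equal-norm. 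Both directions are immediate once the dictionary entries are in hand.

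The third sentence then follows by specializing the first two and reading off the explicit numerology through Theorem~\ref{bigguy}. The shape $(\tfrac{\ell n}{d}, \dots, \tfrac{\ell n}{d})$ is rectangular, forcing tightness; the weight $(\ell, \dots, \ell)$ is constant, forcing equal norms; and Theorem~\ref{bigguy} records each squared norm of the clearable frame as $w_i/\ell = \ell/\ell = 1$, so the frame is unit-norm, hence a FUNTF. Property (5) identifies the maximum entry, here $n$, as the number of frame vectors, and property (3) identifies the number of rows, here $d$, with $\rk(\Phi)$, so the frame has $n$ vectors in $\bF^d$. As a consistency check, Lemma~\ref{lem:sumnorm} gives frame bound $A = n\cdot 1/d = n/d$, matching the common eigenvalue $\lambda_i/\ell = (\ell n/d)/\ell = n/d$ read off from the rectangular shape.

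The single point requiring genuine care — and the step I expect to be the main obstacle — is showing the correspondence is non-vacuous, i.e.\ that such an SSYT exists. This needs $\tfrac{\ell n}{d}$ to be a positive integer (so that the shape is an honest partition, which constrains the admissible $\ell$ when $d \nmid n$) together with the existence of a semistandard filling of the $d \times \tfrac{\ell n}{d}$ rectangle by the content $(\ell^n)$. The latter is a Kostka-positivity statement: with $\lambda = (\tfrac{\ell n}{d})^d$ and $\mu = (\ell^n)$ one checks the partial-sum inequalities for dominance $\lambda \trianglerighteq \mu$, which hold because $n \geq d$ for any spanning frame, guaranteeing at least one such tableau. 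Once existence is secured, every such tableau feeds through Theorem~\ref{bigguy} to the asserted FUNTF, and conversely any clearable FUNTF on $n$ vectors in $\bF^d$ has $\Phi\Phi^\ast = (n/d)I$ and unit norms, hence rectangular shape $(\tfrac{\ell n}{d})^d$ and constant weight $(\ell^n)$, closing the correspondence.
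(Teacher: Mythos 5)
Your proof is correct and follows essentially the same route as the paper's: both read off tightness, equal norms, and the FUNTF numerology directly from the dictionary in Corollary~\ref{cor:SSYTprops} together with the correspondence of Theorem~\ref{bigguy} and the rescaling by $\sqrt{\ell}$. The only substantive addition is your non-vacuousness check via Kostka positivity (dominance $\lambda \trianglerighteq \mu$ holding because $n \geq d$), which the paper omits entirely but which is sound and a worthwhile observation.
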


\begin{proof}
  Recall that for a frame $\Phi$, tightness is equivalent to saying that $\Phi\Phi^\ast = A I$, for some frame bound $A$.  If the frame is cleared, then $A \in \bN$, and Corollary~\ref{cor:SSYTprops} tells us that the shape of the SSYT is $(A,A,\dots, A)$ of length $d$.  The second statement follows immediately from Corollary~\ref{cor:SSYTprops}.3.

We now consider a FUNTF $\Phi=(\varphi_i)_{i=1}^n$ of $n$ vectors in $\bF^d$, which by Lemma~\ref{lem:sumnorm} have norm-squares $(1,1,\dots,1)$ and frame bound $n/d$. So, if $\Phi$ is clearable with clearing constant $\ell$, $(\sqrt{\ell}\varphi_i)_{i=1}^n$ will have constant norm-square $\ell$ and frame bound $\ell n/d$, yielding the desired result,
\end{proof}

Note that we can always rescale any FUNTF to be an equal-norm tight frame with either a chosen frame bound or a chosen norm.  Thus, Theorem~\ref{ssyt2funtf} applies to any clearable equal-norm tight frame. Using $\ell =d$ as the clearing constant, resulting in an equal-norm tight frame with vector norm square $d$ and frame bound $n$, is actually somewhat natural, as it can be viewed as the scaling for so-called real equiangular tight frames associated with Seidel adjacency matrices~\cite{VaSei66} and for Fourier frames without rescaling. This scaling was also used in~\cite{HPpolytope}, where polytope geometry was leveraged to better understand eigensteps.

Notice that the above theorems are stated using the inner eigensteps of a frame and consequently the straight shape SSYT. However, due to Corollary~\ref{straight2skew} these theorems can also be stated through the outer eigensteps and the associated skew-shaped SSYT. It is often easier to look at the straight shape SSYT rather than the skew shape, which is why the focus was placed on the inner eigensteps.

\subsection{Naimark complements}\label{naimarkcomplements}

Naimark complementation is a sort of duality applied to frames which is a useful tool in constructing frame with certain properties or reducing a problem to an easier one  \cite{holmes2004optimal,casazza2013kadison,CFMfusion}.

\begin{definition}\label{defn:naimarktight}
Let $\Phi$ be a tight frame with frame bound $A$. Consider a set $\Psi = ( \psi_i)^{n}_{i=1}$ such that $\gram+\Psi^\ast\Psi = AI$. $\Psi$ is called the \emph{Naimark complement} of $\Phi$.
\end{definition}

If $\Psi$ is a Naimark complement of an $A$-tight $(d,n)$-frame, then $\Psi^\ast \Psi = AI - \Phi^\ast \Phi$ implies that $\Psi$ is a tight frame of $n$ vectors spanning an $(n-d)$-dimensional space, which may be chosen after choice of basis to be $\bF^{n-d}$.  Further, it immediately follows from the definition that the Naimark complement of an equal norm frame is equal norm. Other frame properties are either preserved or dualized in a predictable way when taking the Naimark complement, leading to the usefulness of Naimark complementation as a tool. Thus, it makes sense to talk about the eigensteps of the Naimark complement of a frame as well.  Although a Naimark complement $\Psi$ is not unique, the Gram matrix of a Naimark complement is uniquely defined, meaning that the eigensteps are as well.

All possible eigensteps for $(d,n)$-frames with frame operator spectrum $\lambda = (\lambda_1, \lambda_2, \dots, \lambda_n)$ form a polytope (see, e.g., \cite{cahill2017connectivity}). In \cite{HPpolytope}, the polytope geometry of eigensteps, in particular of equal-norm $(d,n)$ tight frames with frame bound $n$ and squared norms $d$, was leveraged to further characterize eigensteps.
\begin{definition}
    For integers $d < n$, let $\Lambda_{n,d}$ denote the polytope of all triangular GT patterns (not necessarily integral) with top row $\lambda = (\,\,\underbrace{n, \dots, n}_{d \text{ copies}}, \underbrace{0, \dots, 0}_{n-d \text{ copies}})$ and weights $w=\underbrace{(d, \dots, d)}_{n \text{ copies}}$.
\end{definition}
In \cite{HPpolytope}, an isomorphism between $\Lambda_{n,d}$ and $\Lambda_{n,n-d}$ was introduced which mapped eigensteps of a frame to eigensteps of the Naimark complement of the frame. Note that their indexing is relative to outer eigensteps, which we have translated to inner eigensteps.

\begin{prop}\cite{HPpolytope}\label{prop:Naim_polytope}
There exists an involutive isomorphism
\[
N_{n,d}: \Lambda_{n,d} \longrightarrow \Lambda_{n, n-d}
\]
given by 
\[
(N_{n,d}(\lambda))_{i,j}= \begin{cases}
\lambda_{n-i, d+j-i}, & \text{ for } j \leq i \leq d+j-1, \enskip j \leq n-d  \\
0, & \text{ for } j > n-d \\
n, & \text{ for } i > d+j-1,\enskip j \leq n-d \\
\end{cases}
\]
which maps eigensteps of a frame to the eigensteps of its Naimark complement.
\end{prop}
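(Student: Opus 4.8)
The plan is to read $N_{n,d}$ as the map sending the inner eigensteps of a tight frame $\Phi \in \Lambda_{n,d}$ to the eigensteps of its Naimark complement, and then to verify the three ingredients hidden in the phrase ``involutive isomorphism'': that the image lands in $\Lambda_{n,n-d}$, that the map has the claimed frame-theoretic meaning, and that it squares to the identity (which then yields bijectivity). Throughout I would normalize so that $\Phi$ is a $(d,n)$ tight frame with $\frameop = nI$ and $\norm{\phi_i}^2 = d$, so that $\lambda\in\Lambda_{n,d}$ is exactly its triangular eigenstep pattern, and fix a Naimark complement $\Psi$ with $\Psi^\ast\Psi = nI - \gram$ as in Definition~\ref{defn:naimarktight}.

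The frame-theoretic heart is a partial-sum computation, and I would read the index $n-i$ in the formula as telling us to track the \emph{last} $i$ vectors (equivalently, to read the complement's vectors in reverse order, which is still a legitimate Naimark complement). Two complementations are in play. First, the bottom-right $i\times i$ block of $\Psi^\ast\Psi = nI - \Phi^\ast\Phi$ is $nI_i$ minus the Gram matrix of the last $i$ vectors of $\Phi$; this is Naimark complementation applied blockwise. Second, the within-frame identity $\Phi_{n-i}\Phi_{n-i}^\ast + F_i = \frameop = nI$, where $F_i$ is the frame operator of the last $i$ vectors, shows that the nonzero spectrum of that same Gram matrix is $\{\,n-\mu\,\}$ as $\mu$ ranges over $\operatorname{spec}(\Phi_{n-i}\Phi_{n-i}^\ast)$, i.e.\ over the entries of row $n-i$ of $\lambda$. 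Composing the two, the two copies of ``$n-(\cdot)$'' cancel, and the nonzero eigensteps of the complement at level $i$ reduce to the entries $\lambda_{n-i,\,d+j-i}$ of the original pattern, which is precisely the middle case of the formula. The boundary cases are then forced by rank: since the Gram of the last $i$ vectors of $\Phi$ has rank at most $d$ it contributes at least $i-d$ zeros, each becoming an eigenvalue $n$ of the complement (the ``$i>d+j-1$'' case), while the complement's Gram has rank at most $n-d$, forcing the trailing entries with $j>n-d$ to vanish. Reconciling the index shift $d+j-i$ and the sorting with these three regions is the main obstacle, and it is cleanest to do level $i$ by level $i$.

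Once the meaning is pinned down, well-definedness is immediate: the output is the eigenstep pattern of an honest tight frame of $n$ equal-norm vectors (squared-norm $n-d$, by Lemma~\ref{lem:sumnorm}) spanning an $(n-d)$-dimensional space, so it automatically obeys the interlacing inequalities of Definition~\ref{GTdef}, has top row $(\underbrace{n,\dots,n}_{n-d},\underbrace{0,\dots,0}_{d})$, and has constant weight $n-d$; hence it lies in $\Lambda_{n,n-d}$. Alternatively, since the middle block of $N_{n,d}$ is merely a relabeling of coordinates and the other two blocks are constants, $N_{n,d}$ is the restriction of an affine map, and one can verify the interlacing of the image directly from that of $\lambda$ region by region, without invoking a realizing frame.

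Finally, involutivity delivers the isomorphism for free. The Naimark complement of $\Psi$ (again read in reverse) is $\Phi$ up to reordering, so applying $N_{n,n-d}$ to $N_{n,d}(\lambda)$ recovers $\lambda$; thus $N_{n,d}$ is an affine bijection with affine inverse equal to itself. I would argue this conceptually as just described, or else verify $N_{n,n-d}\circ N_{n,d}=\mathrm{id}$ by substituting the piecewise formula into itself and checking that each of the three regions maps back correctly. I expect this substitution to be purely mechanical once the region boundaries $j\le n-d$ and $i\le d+j-1$ are tracked carefully, so the genuine work of the proof is concentrated in the two-complementation spectral computation of the second paragraph.
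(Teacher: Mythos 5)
The paper does not actually prove this proposition --- it is imported from \cite{HPpolytope} with only a remark about translating from outer to inner eigensteps --- so your attempt can only be judged on its own merits. The core of your argument is correct and is, I think, the right way to see where the formula comes from: writing the bottom-right $i\times i$ block of $\Psi^\ast\Psi=nI-\gram$ as $nI_i$ minus the Gram matrix of the last $i$ vectors of $\Phi$, and then using $\Phi_{n-i}\Phi_{n-i}^\ast+F_i=nI$ to convert the spectrum of that Gram block into the entries of row $n-i$ of $\lambda$, really does make the two ``$n-(\cdot)$'' operations cancel and yields the three regions of the piecewise formula (the $n$'s coming from the kernel of the rank-$\le d$ Gram block, the $0$'s from the rank bound $n-d$ on the complement). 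The top row, the constant weight $n-d$ via Lemma~\ref{lem:sumnorm}, and the involutivity bookkeeping all go through.

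The genuine gap is the parenthetical claim that reading the complement's vectors in reverse order ``is still a legitimate Naimark complement.'' Under Definition~\ref{defn:naimarktight} it is not: if $P$ is the order-reversing permutation, then $(\Psi P)^\ast(\Psi P)=nI-(\Phi P)^\ast(\Phi P)$, so $\Psi P$ is a Naimark complement of the \emph{reversed} frame $\Phi P$, not of $\Phi$; and since eigensteps depend on the ordering of the vectors, this is not cosmetic. The eigensteps of the Naimark complement taken in the original order are $(i,j)\mapsto n-\lambda_{i,i+1-j}$ (the map $\tilde N$ of Theorem~\ref{generalGTcomp} with $B=n$), and on general points of $\Lambda_{n,d}$ this differs from $N_{n,d}$: for $n=5$, $d=2$ and the valid pattern with rows $(2)$, $(3,1)$, $(5,1,0)$, $(5,3,0,0)$, $(5,5,0,0,0)$ from bottom to top, the two maps produce second rows $(4,2)$ and $(5,1)$ respectively. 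So what your computation actually establishes is that $N_{n,d}$ sends the eigensteps of $\Phi$ to the eigensteps of a Naimark complement of $\Phi$ \emph{listed in reverse order} (equivalently, of a Naimark complement of the reversed frame); to match the literal statement you must either make that reordering convention part of the claim --- presumably what \cite{HPpolytope} intends --- or flag it explicitly rather than asserting the reversal is harmless. Your involutivity argument survives, since reversing and complementing twice returns exactly $\Phi$, but the frame-theoretic interpretation in your second paragraph needs this caveat made precise.
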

Note that this isomorphism is well-defined on GT patterns for all equal-norm tight frames, not just the clearable ones.  However, it immediately follows from the definition that cleared frames are mapped to cleared frames.

\begin{example}
For example, if $n=5$ and $d=3$ we have that
\begin{align*}
\begin{matrix}
5 & & 5 & & 5 & & 0 & & 0  \\
& 5 & & 5 & & 2 & & 0  \\
& & 5 & & 4 & & 0 \\ 
& & & 5 & & 1 & & \\ 
& & & & 3 & \\ 
\end{matrix}
&&\longleftrightarrow &&
\begin{matrix}
5 & & 5 & & 0 & & 0 & & 0 \\
&5 & & 3 & & 0 & & 0 \\
&&5 & & 1 & & 0 \\
&&&4 & & 0 \\ 
&&&& 2 
\end{matrix}
\end{align*}
Notice that this size of the triangles on the top consisting solely of $0$'s or of $n$'s are swapped, while the center parallelogram flips upside down. 
\end{example}

Alternatively, we can construct a bijection between tableaux that is equivalent to the above involution on integer GT patterns. This bijection is essentially a generalization of the Robinson–Schensted–Knuth (RSK) correspondence used in recent work to give a combinatorial proof of a geometric result concerning maps of curves to projective space~\cite{GRgeneralized}. This gives further evidence that active research on tableaux could be used to better understand frames.

\begin{thm}\label{tighttableaux}
Fix $d < n$ and denote 
\[
\lambda_n^d=\underbrace{(n, \dots, n)}_{d \text{ copies}} \quad \textrm{and} \quad w_d^n =\underbrace{(d, \dots, d)}_{n \text{ copies}}.
\]
Let $T$ be an SSYT of shape $\lambda_n^d$ with constant weights $w_d^n$. We define $\gamma(T)$ to be the SSYT constructed from the following algorithm: 
\begin{enumerate}
\item Construct an intermediary filling $\Gamma(T)$ of the Young diagram of $T$, which is not itself an SSYT, by replacing each entry $j$ in $T$ with $n+1-j$. 
\item Construct the $i$th column of $\gamma(T)$ by taking the set complement of the $i$th column of $\Gamma(T)$ from $[n]$ and then reordering to be strictly increasing from bottom to top.
\end{enumerate}
Then $\gamma$ is a bijection from SSYT of shape $\lambda_n^d$ and weights $w_d^n$ to SSYT of shape $\lambda_n^{n-d}$ and constant weights $w_{n-d}^n$
\end{thm}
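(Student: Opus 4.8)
The plan is to exploit the fact that $\gamma$ operates one column at a time, and to translate all three SSYT conditions into statements about a single order on subsets of $[n]$. First I would record that, since $T$ is column-strict and rectangular, each column $i$ of $T$ is encoded by the $d$-element set $S_i \subseteq [n]$ of its entries, listed increasingly from bottom to top. By construction the $i$th column of $\gamma(T)$ is exactly $\sigma(S_i) := [n]\setminus\{n+1-a : a \in S_i\}$, again listed increasingly from bottom to top. In particular $|\sigma(S_i)| = n-d$ and the columns are strictly increasing by fiat, so $\gamma(T)$ already has the correct rectangular shape $\lambda_n^{n-d}$. The theorem therefore reduces to three claims: the rows of $\gamma(T)$ are weakly increasing, the weight is constantly $n-d$, and $\gamma$ is invertible.

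For the row condition, the key device is the dominance order on equal-size subsets: write $S \preceq S'$ if the $r$th smallest element of $S$ is $\leq$ the $r$th smallest element of $S'$ for every $r$. Since the entry in row $r$ of a column is precisely the $r$th smallest element of that column's set, one sees immediately that $T$ is an SSYT exactly when $S_i \preceq S_{i+1}$ for all $i$, and likewise that $\gamma(T)$ has weakly increasing rows exactly when $\sigma(S_i) \preceq \sigma(S_{i+1})$ for all $i$. So it suffices to show that $\sigma$ is order-preserving. I would prove this through the counting characterization $S \preceq S' \iff |S\cap[t]| \geq |S'\cap[t]|$ for every $t \in [n]$ (obtained by testing at $t$ equal to the relevant smallest elements). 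From this, both the complement $c(S) = [n]\setminus S$ (via $|c(S)\cap[t]| = t - |S\cap[t]|$) and the reflection $r(S) = \{n+1-a : a \in S\}$ are seen to reverse $\preceq$, so the composite $\sigma = c\circ r$ preserves $\preceq$, yielding the needed row inequalities.

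The weight count is then a direct tally: a value $k$ lies in column $i$ of $\gamma(T)$ iff $n+1-k \notin S_i$, and because $T$ has constant weight $d$ the value $n+1-k$ occupies exactly $d$ of the $n$ columns (at most one per column, by column-strictness); hence $k$ appears in exactly $n-d$ columns of $\gamma(T)$. Thus every value of $[n]$ appears $n-d$ times, confirming weight $w_{n-d}^n$. For invertibility, I would observe that $r$ and $c$ are commuting involutions, so $\sigma^2 = \mathrm{id}$ as a map $\binom{[n]}{d} \to \binom{[n]}{n-d} \to \binom{[n]}{d}$; since $\gamma$ acts column-by-column through $\sigma$, the very same recipe at level $n-d$ inverts $\gamma$, establishing the claimed bijection.

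The main obstacle is the row-monotonicity step in the second paragraph: recognizing the dominance order as the correct language and verifying that reflection and complementation each reverse it is the conceptual heart of the argument. Once that order-preservation is in hand, the shape, weight, and invertibility claims follow by routine bookkeeping.
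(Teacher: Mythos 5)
Your proposal is correct, and it reaches the key step --- semistandardness of $\gamma(T)$ --- by a genuinely different route than the paper. The paper works directly on the filling $\Gamma(T)$: it observes that replacing $j$ by $n+1-j$ reverses the monotonicity conditions, then argues row by row (``the leftmost columns with $j$ on top will have entries $1,\hdots,j-1$ in $\gamma(T)$,'' and so on), concluding by ``traversing the remaining rows''; the involution property is simply asserted as clear. You instead encode each column as a $d$-subset $S_i\subseteq[n]$, identify row-weak-increase of a rectangular SSYT with the dominance (Gale) order $S_i\preceq S_{i+1}$, and reduce everything to the single lemma that complementation and reflection each reverse $\preceq$ (via the counting characterization $S\preceq S'\iff\abs{S\cap[t]}\geq\abs{S'\cap[t]}$ for all $t$), so their composite $\sigma=c\circ r$ preserves it. This buys you a cleaner and more fully justified argument: the semistandardness claim follows from one order-theoretic fact rather than an informal row-traversal, and the bijectivity claim is actually proved (since $r$ and $c$ are commuting involutions, $\sigma^2=\mathrm{id}$, so $\gamma$ is an involution exchanging the strata $d$ and $n-d$) rather than declared obvious. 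The weight computation is the same in both proofs. The only mild caveat is that your identification of row-monotonicity with $S_i\preceq S_{i+1}$ relies on all columns having equal height, which holds here because the shapes $\lambda_n^d$ and $\lambda_n^{n-d}$ are rectangular; it is worth stating that explicitly, since the paper later generalizes this construction ($\bc$) to non-rectangular shapes where the column-set bookkeeping is more delicate.
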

We first give an example before proving the theorem.
\begin{example}
Consider the tableau below: 
\begin{align*}
\begin{ytableau}
4 & 4 & 5 & 5 & 5 \\
2 & 3 & 3 & 3 & 4 \\
1 & 1 & 1 & 2 & 2 
\end{ytableau} &&
\begin{ytableau}
\color{red}2 &\color{red} 2 &\color{red} 1 & \color{red}1 & \color{red}1 \\
\color{red}4 & \color{red}3 & \color{red}3 & \color{red}3 & \color{red}2 \\
\color{red}5 &\color{red} 5 & \color{red}5 & \color{red}4 & \color{red}4 
\end{ytableau} && 
\begin{ytableau}
3 & 4 & 4 & 5 &5\\
1 & 1 & 2 & 2 &3
\end{ytableau}
\end{align*}
If we start with the leftmost SSYT $T$ and replace each entry $j$ with $n+1-j$, we obtain the Young diagram filling $\Gamma(T)$ with red entries. Further, if we then look at the red tableau and take the order preserving set complement of each column we obtain the rightmost SSYT $\gamma(T)$. We call the leftmost and rightmost SSYT \emph{complements} of each other. 
\end{example}

\begin{proof}
We want to prove that $\gamma$ is in fact a bijection between SSYT filled with $[n]$ with equal weight $d$ and SSYT filled with $[n]$ with equal weight $n-d$. 

Let $T$ be an SSYT filled with $[n]$ with equal weight $d$. First note that $\Gamma(T)$ has $d$ copies of each element of $[n]$ by construction.  Then, when taking the set complements of the columns to go from $\Gamma(T)$ to $\gamma(T)$, we get that each element of $[n]$ appears exactly $n-d$ times.

We now show that $\gamma(T)$ is an SSYT. Notice that Step 1 in the above algorithm reverses the semistandard-ness of the SSYT: the rows of $\Gamma(T)$ are weakly decreasing and the columns are strongly decreasing.  Consider the top row of $\Gamma(T)$: this has the smallest entry in each column, which in turn controls the smallest entries in each column of $\gamma(T)$ via Step 2.  If $j$ is the largest entry in the top row of $\Gamma(T)$, then the leftmost columns with $j$ on the top will have entries $1, \hdots, j-1$ in $\gamma(T)$.  Then the columns with $j-1$ on top in $\Gamma(T)$ will have entries $1, \hdots, j-2$ in $\gamma(T)$ and so on.  This guarantees that the bottom row of $\gamma(T)$ is weakly increasing, as well as parts of the of the higher rows.  Further, the columns are filled strongly increasing by construction.  By traversing the remaining rows of $\Gamma(T)$, we see that $\gamma(T)$ is semistandard.

Finally, this map is clearly an involution with $\gamma(\gamma(T))=T$.
\end{proof}
Note that the definition of $\gamma$ did not rely on $d$; thus, $\gamma$ is an automorphism of all constant-weight SSYTs that acts stratum-wise on $n$ and $d$.

\begin{thm}\label{thm:SSYTcompNai}
Fix $d < n$ and denote 
\[
\lambda_n^d=\underbrace{(n, \dots, n)}_{d \text{ copies}} \quad \textrm{and} \quad w_d^n =\underbrace{(d, \dots, d)}_{n \text{ copies}}.
\]
Let $T$ be an SSYT of shape $\lambda_n^d$ with constant weights $w_d^n$.  Further, let $\iota$ be the bijection from straight-type SSYT to integer GT patterns in Theorem~\ref{gt2ssyt}, $N$ the bijection from GT patterns with $\lambda_n^d$ and weights $w_d^n$ to GT patterns with $\lambda_n^{n-d}$ and weights $w_{n-d}^n$ in Theorem~\ref{prop:Naim_polytope}, and $\gamma$ the bijection from SSYT of shape $\lambda_n^d$ and weights $w_d^n$ to SSYT of shape $\lambda_n^{n-d}$ and constant weights $w_{n-d}^n$ from Theorem~\ref{tighttableaux}.

Then $N\left(\iota(T)\right) = \iota(\gamma(T))$.
\end{thm}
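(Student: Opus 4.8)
The plan is to collapse both sides of $N(\iota(T)) = \iota(\gamma(T))$ to a single explicit entrywise formula written in terms of the column sets of $T$, and then to match the resulting expression against the three piecewise cases of the Naimark map $N$ in Proposition~\ref{prop:Naim_polytope}. Throughout I index the triangular GT pattern so that row $i$ (counting from the bottom, $i=1,\dots,n$) records the shape $\lambda^{(i)}$ of the subtableau of boxes with entry $\le i$, so that row $1$ is the single bottom entry and row $n$ is the full shape. Since $T$ has shape $\lambda_n^d$, it is a rectangle with $n$ columns of length $d$; I write the $c$-th column as a set $C_c \se [n]$ with $\abs{C_c}=d$ (so the $r$-th box from the bottom of column $c$ holds the $r$-th smallest element of $C_c$).

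First I would establish the key formula
\[
\iota(U)_{i,r} = \abs{\{c : \abs{D_c \cap [i]} \ge r\}}
\]
valid for any SSYT $U$ with column sets $D_c$. This reads directly off the construction of $\iota$ in Theorem~\ref{gt2ssyt}: row $i$ of $\iota(U)$ is $\lambda^{(i)}$; the conjugate part $(\lambda^{(i)})'_c$ is the number of entries $\le i$ in column $c$, namely $\abs{D_c \cap [i]}$; and the conjugation identity $\lambda_r = \abs{\{c : \lambda'_c \ge r\}}$ then gives the formula. Applying this to $U=T$, and to $U=\gamma(T)$ — which is an SSYT by Theorem~\ref{tighttableaux}, with $c$-th column set $\sigma(C_c) := (n+1)-([n]\setminus C_c)$ read off from the definition of $\gamma$ — reduces the whole statement to comparing two cardinalities of column-index sets.

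The heart of the computation is the identity
\[
\abs{\sigma(C_c)\cap[i]} = i-d+\abs{C_c\cap[n-i]},
\]
obtained by noting $v \in \sigma(C_c)\cap[i]$ iff $v\le i$ and $n+1-v \in [n]\setminus C_c$, then counting. Substituting into the key formula gives
\[
\iota(\gamma(T))_{i,r} = \abs{\{c : \abs{C_c\cap[n-i]} \ge r+d-i\}} = \iota(T)_{n-i,\,d+r-i},
\]
the last equality again by the key formula, valid whenever $1 \le d+r-i \le n-i$. I would check that these two inequalities are exactly the conditions $r \le i \le d+r-1$ and $r \le n-d$ defining the main case of $N$, so this matches $\lambda_{n-i,d+j-i}$. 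For the two boundary regimes I would argue directly from the cardinality identity: if $r > n-d$ then $r+d-i > n-i \ge \abs{C_c\cap[n-i]}$ for every $c$, forcing the count to $0$; and if $i > d+r-1$, equivalently $r \le i-d$ (which together with $i\le n$ rules out $r>n-d$, so the two regimes never overlap), then $r+d-i \le 0 \le \abs{C_c\cap[n-i]}$ for all $c$, giving the full count $n$. These are precisely the $0$- and $n$-cases of $N$, so $\iota(\gamma(T))_{i,r} = (N(\iota(T)))_{i,r}$ for every valid $(i,r)$.

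The main obstacle is bookkeeping rather than anything conceptual: aligning the two indexing conventions. The Naimark formula has a row-reversal $i\mapsto n-i$ and a part-shift $j\mapsto d+j-i$ built in, while the French-notation, bottom-indexed description of $\iota$ must be threaded through $\gamma$'s column complementation and the reversal $t\mapsto n+1-t$. The delicate points are (i) confirming that $\gamma$ preserves column index, so that column $c$ of $\gamma(T)$ is exactly $\sigma(C_c)$, and (ii) verifying that the three case-boundaries produced by the cardinality identity coincide on the nose with the piecewise conditions of $N$, including that the $0$- and $n$-regimes cannot collide. Carrying the running $n=5$, $d=3$ example alongside the symbolic argument is the cheapest way to keep all the conventions honest.
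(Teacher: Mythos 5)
Your proposal is correct, and it takes a genuinely different route from the paper's proof. The paper argues tableau-side and value-by-value: it compares $\iota^{-1}(N(\iota(T)))$ with $\gamma(T)$ by locating the $1$'s, then the $2$'s (interpreting $\mu_{1,1}=\lambda_{n-1,d}$, $\mu_{2,2}=\lambda_{n-2,d}$, etc.\ as counts of boxes in the top rows of $T$ with bounded entries), and then asserts that ``iteratively continuing this process'' through the $\mu_{3,j}$'s up to the $\mu_{n-d,j}$'s completes the argument. You instead reduce everything to the single closed-form identity $\iota(U)_{i,r}=\abs{\{c:\abs{D_c\cap[i]}\ge r\}}$ via conjugate partitions, push $\gamma$ through as the column-set operation $\sigma(C_c)=(n+1)-([n]\setminus C_c)$, and verify the cardinality identity $\abs{\sigma(C_c)\cap[i]}=i-d+\abs{C_c\cap[n-i]}$, from which all three piecewise cases of $N$ (including the $0$- and $n$-regimes and their non-overlap) fall out uniformly; I checked the identity and the case boundaries and they are right. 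What your approach buys is completeness and rigor: it verifies every entry at once rather than the first two strata plus an appeal to iteration, and the boundary cases of $N$ are handled by the same inequality rather than ad hoc. What the paper's approach buys is a more visual account of how the skew strips of $\gamma(T)$ arise from the columns of $T$ lacking large entries, which is closer in spirit to how $\iota$ was constructed in Theorem~\ref{gt2ssyt}. Your two flagged delicate points are the right ones, and both resolve as you expect: the definition of $\gamma$ in Theorem~\ref{tighttableaux} complements the $i$th column into the $i$th column (no column reversal, unlike $\bc$), and the two degenerate regimes of $N$ cannot collide since $r>n-d$ and $r\le i-d\le n-d$ are contradictory.
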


\begin{proof}\label{proof1}
Consider a tableau $T$ of shape $\lambda_n^d$ with constant weights $w_d^n$ and its associated GT pattern $\iota(T)$. 

First note that in the map $\iota$, 
\[
\lambda_{i,1}, \hdots, \lambda_{i,i}
\]
gives both the $i$th row of the triangular GT pattern in $\iota(T)$ and describes via the skew SSYT
\[
(\lambda_{i,1}, \hdots, \lambda_{i,i})/(\lambda_{i-1,1}, \hdots, \lambda_{i-1,i-1})
\]
exactly where the $i$'s are in $T$.

Let $\left((\mu_{i,j})_{j=1}^i\right)_{i=1}^{n-d}$ be the GT pattern for $N\left(\iota(T)\right)$.  Then
\[
\mu_{i,j} = \begin{cases}
\lambda_{n-i, d+j-i}, & \text{ for } j \leq i \leq d+j-1, \enskip j \leq n-d  \\
0, & \text{ for } j > n-d \\
n, & \text{ for } i > d+j-1,\enskip j \leq n-d \\
\end{cases}
\]

We now wish to compare the SSYTs $\iota^{-1}\left(N\left(\iota(T)\right)\right)$ and $\gamma(T)$. Both tableaux have shape $\lambda_n^{n-d}$ and weights $w_{n-d}^n$. We first consider the placement of the $1$'s, which need to be the $n-d$ left-hand entries of the bottom row due to shape and weights, but we analyze this case specifically to get a flavor of the arguments.  The $1$'s in $\iota^{-1}\left(N\left(\iota(T)\right)\right)$ are necessarily in the first $\mu_{1,1}=\lambda_{n-1,d}$ boxes of the bottom row, where $\lambda_{n-1,d}$ counts the number of boxes $\leq n-1$ in the $d$th row, i.e., the top row of $T$.  Since the top row of $T$ must contain $d$ $n$'s due to the weight vector and has $n$ boxes due to the shape, $\lambda_{n-1,d}=n-d$.  On the other hand, a column of $\gamma(T)$ has $1$ if and only if the corresponding column of $\Gamma(T)$ does not have a $1$ if and only if the corresponding column of $T$ does not have a $n+1-1=n$.  And a corresponding column of $T$ does not have an $n$ precisely when the largest value is $\leq n-1$, i.e., the column is in the first $\lambda_{n-1,d}$ columns of $T$.

Similarly, the locations of the $2$'s in $\iota^{-1}\left(N\left(\iota(T)\right)\right)$ are
\[
(\mu_{2,1},\mu_{2,2})/(\mu_{1,1}),
\]
where for $d, n-d\geq 2$,
\[
\mu_{2,1} = \lambda_{n-2,d-1} \quad \textrm{and} \quad \mu_{2,2} = \lambda_{n-2,d}
\]
We note that if $d=1$, then $T$ is just the single row with $n$ entries filled with $1, 2, 3, 4, 5$ in order, and if $n-d=1$, $\gamma(T)$ is that single row.  In either case, the complements are uniquely defined.

Returning to $d, n-d \geq 2$, $\mu_{2,2}= \lambda_{n-2,d}$ gives us the number of $2$'s in the second row (from the bottom) of $\iota^{-1}\left(N\left(\iota(T)\right)\right)$, where $\lambda_{n-2,d}$ counts the number of boxes in the top row of $T$ filled with entries from $[n-2]$.  Thus, the largest entries in the first $\lambda_{n-2,d}$ columns of $T$ are $\leq n-2$, meaning those columns do not have $n-1=n+1-2$.  So, in $\gamma(T)$, those columns must have $2$'s. And since $\lambda_{n-2,d}\leq \lambda_{n-1,d}=\mu_{1,1}$, those columns also already have $1$'s in $\gamma(T)$, hence those are precisely the $2$'s in the second row of $\gamma(T)$.

Now, $\mu_{2,1}-\mu_{1,1}$ is the number of $2$'s in the first row of $\iota^{-1}\left(N\left(\iota(T)\right)\right)$.  In $T$, $\lambda_{n-2,d-1}$ counts the number of entries in the second row from the top with values in $[n-2]$.  Since SSYT are strictly increasing, the remaining $n-\lambda_{n-2,d-1}$ entries must be $n-1$ (as no $n$'s can be outside of the top row).  However, if there is an $n-1$ in the $(d-1)$st row, there must be an $n$ in the $d$th row, meaning those columns are precisely the columns which have neither $1=n+1-n$ nor $2=n+1-(n-1)$ in $\gamma(T)$. So, the first $\lambda_{n-2,d-1}$ boxes of row $1$ of $\gamma(T)$ are filled with $1$'s and $2$'s, as desired.

By iteratively continuing this process, moving through the $\mu_{3,j}$'s to $\mu_{n-d,j}$'s, one obtains the following commutative diagram.

\begin{center}
\scalebox{1.2}{
\begin{tikzpicture}
  
  \node (A) at (0, 2) {$T$};
  \node (B) at (3, 2) {$\iota(T)$};
  \node (C) at (0, 0) {$\gamma(T)$};
  \node (D) at (3, 0) {$\iota(\gamma(T))=N(\iota(T))$};

  \draw[<->] (A) -- (B) node[midway, above] {$\iota$};
  \draw[<->] (B) -- (D) node[midway, right] {$N$};
  \draw[<->] (D) -- (C) node[midway, below] {$\iota$};
  \draw[<->] (C) -- (A) node[midway, left] {$\gamma$};
\end{tikzpicture}
}
\end{center}
\end{proof} 

\begin{remark}
We will call a particular tableaux or GT pattern under these isomorphisms, \emph{complements} of each other. 
\end{remark}

\begin{corollary}
Let $\Phi$ be a clearable equal norm tight frame and let $\Psi$ be a Naimark complement of $\Phi$. Then the SSYT $T$ associated to the eigensteps of $\Phi$ and the SSYT $R$ associated to the eigensteps of $\Psi$ satisfy $\gamma$
\[
\gamma(T) = R
\]
\end{corollary}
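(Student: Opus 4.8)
The plan is to deduce the corollary almost entirely from Theorem~\ref{thm:SSYTcompNai}, which has already established the combinatorial heart of the statement; what remains is to translate that identity back into the language of frames and Naimark complements. First I would verify that $T$ and $R$ actually lie in the domain and codomain of the maps appearing in Theorem~\ref{thm:SSYTcompNai}. Since $\Phi$ is a clearable equal-norm tight frame, Theorem~\ref{ssyt2funtf} together with Corollary~\ref{cor:SSYTprops} guarantees that, after clearing (we may take the clearing constant $\ell = d$ via the rescaling remarked on after Theorem~\ref{ssyt2funtf}), the associated SSYT $T$ is rectangular of shape $\lambda_n^d$ with constant weights $w_d^n$. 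By the remark following Definition~\ref{defn:naimarktight}, the Naimark complement $\Psi$ is again an equal-norm tight frame, and since $N$ sends cleared frames to cleared frames, its associated SSYT $R$ is rectangular of shape $\lambda_n^{n-d}$ with constant weights $w_{n-d}^n$. Hence both $T$ and $R$ fit the hypotheses of Theorem~\ref{thm:SSYTcompNai} and Theorem~\ref{tighttableaux}.

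Next I would run the chain of identifications. By construction of $\iota$ (Theorem~\ref{gt2ssyt}), $\iota(T)$ is the cleared triangular GT pattern of inner eigensteps of $\Phi$. Proposition~\ref{prop:Naim_polytope} tells us that $N$ carries the eigensteps of $\Phi$ to the eigensteps of its Naimark complement $\Psi$; since $R$ is by definition the SSYT associated to the eigensteps of $\Psi$, this gives $\iota(R) = N(\iota(T))$. Theorem~\ref{thm:SSYTcompNai} supplies $N(\iota(T)) = \iota(\gamma(T))$. Combining these two equalities yields $\iota(R) = \iota(\gamma(T))$, and because $\iota$ is a bijection we conclude $R = \gamma(T)$, as desired.

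The only genuinely nontrivial input is Theorem~\ref{thm:SSYTcompNai}, so there is no real obstacle left at this stage; the corollary is essentially a restatement of that theorem with its combinatorial objects reinterpreted as frames via the dictionary of Corollary~\ref{cor:SSYTprops}. The one point I would take care to spell out explicitly is that the Naimark complement's eigensteps, cleared by the \emph{same} constant $\ell$, are precisely $N(\iota(T))$; this is exactly the content of the final remark after Proposition~\ref{prop:Naim_polytope} that cleared frames are mapped to cleared frames, so the clearing constant is preserved and the diagram in the proof of Theorem~\ref{thm:SSYTcompNai} closes up consistently on the frame side.
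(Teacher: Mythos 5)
Your argument is correct and follows essentially the same route as the paper: the paper's proof likewise cites the fact (from Proposition~\ref{prop:Naim_polytope}) that $N$ carries eigensteps of $\Phi$ to eigensteps of $\Psi$ and then invokes Theorem~\ref{thm:SSYTcompNai}, concluding $R=\gamma(T)$ by injectivity of $\iota$. Your additional checks that $T$ and $R$ are rectangular with the right constant weights, and that the clearing constant is preserved, are sound and merely make explicit what the paper defers to the remarks surrounding the corollary.
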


\begin{proof}
It was proven in \cite{HPpolytope} that $N$ maps between eigensteps of Naimark complements. Thus, the result immediately follows from Theorem~\ref{thm:SSYTcompNai}.
\end{proof}

Being both equal-norm tight and clearable means that the top row is equal to $\frac{\ell n}{d}$ and the row sums are increasing by $\ell$. Recall that this is turn places restrictions on what the associated tableau can look like. The shape of the tableaux will be a $d \times \frac{\ell n}{d}$ rectangle with weight vector $(\ell, \ell, \dots, \ell)$. Fortunately, these are the exact conditions required for Theorem~\ref{tighttableaux}. \\

\subsection{Generalized Naimark complements}\label{generalizednaimark}

We wish to be able to generalize the process in the preceding section to frames that are not necessarily either equal norm or tight. Results from \cite{king2025} prove the impossibility of a generalized Naimark complement being continuous, involutive, and Gale. However, a complement can satisfy any two of those conditions. In this work, we look at a generalized Naimark complement that is both continuous and involutive, but not Gale. This complement is a reformulation~\cite{king2025} of the generalized Naimark complement defined in~\cite{CFMevery}, where the reformulation makes the process more amenable to techniques on SSYT.  
\begin{definition}\label{defn:naimnontigh}
    Let $\Phi$ be $n$ vectors in $\bF^d$ with non-trivial span, where $B$ is the largest eigenvalue of $\Phi\Phi^\ast$.  Then we call $\Psi$ a generalized complement if $\Phi^\ast\Phi +\Psi^\ast\Psi = BI$. 
\end{definition}
This covers not only the case that $\Phi$ is a frame for $\bF^d$ but also when $\Phi$ is merely a frame for its span.  
In both cases, $B$ is the optimal upper frame bound. 

The involutive isomorphism $\gamma$ on rectangular SSYT in Theorem~\ref{tighttableaux} is a specific application of a map in \cite{GRgeneralized}, which in turn is a specific case of a more general involutive isomorphism on SSYT from \cite{RSpercent} called boxcomp.  Boxcomp will allow us to generalize Naimark complementation to non-rectangular SSYT.

\begin{definition}\label{boxcomp}
Let $T$ be a SSYT filled with $[n]$ and at most $c$ columns. $\bc(T)$ is the SSYT whose $j$th column (when viewed as a set) is the complement of $c+1-j$th column of $T$ in $[n]$. 
\end{definition}

Note that the Boxcomp map is proven to be an isomorphism in \cite{RSpercent}, but it is not difficult to derive that $\bc(T)$ will remain semistandard and is an involution.  Note also that boxcomp depends on the parameter $c$.  We will always set $c$ to be the number of columns of the SSYT.

\begin{example}
Consider the tableau $T$:
\[
\begin{ytableau}
5 \\
4 & 5 & 5 & 5 \\
2 & 2 & 3 & 3 & 5\\
1 & 1 & 1 & 2 & 2 
\end{ytableau}
\]
We have that $\bc(T)$ is 
\[
\begin{ytableau}
4 \\
3 & 4 & 4 & 4 \\
1 & 1 & 2 & 3 & 3\\
\end{ytableau}
\]
\end{example}

Notice that in boxcomp, since each column in $\bc(T)$ is the complement of a column in the tableau, the partition for each entry in $\bc(T)$ will precisely be the complement of that partition in T. Inspired by both boxcomp and the generalized Naimark, we define a map on polytopes of eigensteps.

\begin{definition}
    For nonnegative, non-increasing vectors $\lambda=(\lambda_1, \hdots, \lambda_n)$ and  $w =(w_1,\hdots, w_n)$ that satisfy Schur-Horn, let $\Lambda_{\lambda,w}$ denote the polytope of all triangular GT patterns (not necessarily integral) with the top row $\lambda$ and row sum differences $w$.
\end{definition}
\begin{thm}\label{generalGTcomp}
For nonnegative, non-increasing vectors $\lambda=(\lambda_1, \hdots, \lambda_n)$ and $w =(w_1,\hdots, w_n)$ that satisfy Schur-Horn and with $\lambda_n=0$, set $B=\lambda_1$.  Then there exists an involutive isomorphism
\[
\tilde{N}_{\lambda,w}: \Lambda_{\lambda,w} \longrightarrow \Lambda_{(B-\lambda_n, \hdots, B-\lambda_1),(B-w_1, \hdots, B-w_n)}
\]
given by
\[
\left(\tilde{N}_{\lambda,w}\left( ((\lambda_{k,\ell})_{\ell=1}^k)_{k=1}^n\right)\right)_{i,j} = 
B - \lambda_{i, i+1-j}.
\]
\end{thm}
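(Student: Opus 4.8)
The plan is to verify directly that the coordinate formula $\mu_{i,j} := B - \lambda_{i,i+1-j}$ produces a legitimate triangular GT pattern with the asserted top row and row-sum differences, and then to exploit the purely affine, coordinate-permuting nature of the map to obtain the bijection and involution almost for free. First I would fix an arbitrary $\Lambda = ((\lambda_{k,\ell})_{\ell=1}^k)_{k=1}^n \in \Lambda_{\lambda,w}$ and check the two defining inequalities of Definition~\ref{GTdef} for its image $\mu$. Because row $i$ of $\mu$ is row $i$ of $\Lambda$ read backwards (the substitution $j \mapsto i+1-j$) and subtracted from $B$, the weak-decrease condition $\mu_{i,j} \ge \mu_{i,j+1}$ unwinds to $\lambda_{i,i-j} \ge \lambda_{i,i+1-j}$, which is exactly the original weak decrease; and after setting $j' = i+1-j$ the interlacing condition $\mu_{i+1,j} \ge \mu_{i,j} \ge \mu_{i+1,j+1}$ becomes precisely the original interlacing $\lambda_{i+1,j'} \ge \lambda_{i,j'} \ge \lambda_{i+1,j'+1}$. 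Nonnegativity is the only clause needing a separate argument: iterating the interlacing bound gives $\lambda_{i,j} \le \lambda_{i+1,j} \le \cdots \le \lambda_{n,j} \le \lambda_{n,1} = \lambda_1 = B$, so every entry of $\Lambda$ is at most $B$ and hence every $\mu_{i,j} \ge 0$.

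Next I would confirm that $\mu$ lands in the target polytope by computing its top row and weights. The top row is $\mu_{n,j} = B - \lambda_{n,n+1-j} = B - \lambda_{n+1-j}$, which as $j$ ranges over $1,\dots,n$ yields $(B-\lambda_n, \dots, B-\lambda_1)$, matching the claimed top row. For the weights, the reversal does not change row sums, so $\sum_{j=1}^i \mu_{i,j} = iB - \sum_{\ell=1}^i \lambda_{i,\ell}$, and therefore the $i$th row-sum difference of $\mu$ equals $B$ minus the $i$th row-sum difference of $\Lambda$, namely $B - w_i$; the weight vector of $\mu$ is thus $(B-w_1,\dots,B-w_n)$, exactly as required.

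Finally I would dispatch the isomorphism and involution claims. The map is the restriction of the affine bijection of the ambient coordinate space that negates and permutes coordinates, so it automatically sends $\Lambda_{\lambda,w}$ bijectively and affinely onto its image; combined with the previous paragraph this makes it an isomorphism $\Lambda_{\lambda,w} \to \Lambda_{(B-\lambda_n,\dots,B-\lambda_1),(B-w_1,\dots,B-w_n)}$. For involutivity I would apply the map a second time: the target has top row $\lambda' = (B-\lambda_n,\dots,B-\lambda_1)$ with leading entry $\lambda'_1 = B-\lambda_n$, and this is exactly where the hypothesis $\lambda_n = 0$ enters, since it forces the new constant $B' = \lambda'_1$ to coincide with $B$ (and symmetrically $\lambda'_n = B - \lambda_1 = 0$, so the second application meets the same hypothesis). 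Then $(\tilde{N}_{\lambda',w'}(\mu))_{i,j} = B - \mu_{i,i+1-j} = B - (B - \lambda_{i,j}) = \lambda_{i,j}$, recovering $\Lambda$.

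The hard part will be purely bookkeeping with the reversal index $i+1-j$ when matching up the interlacing inequalities, where it is easy to misalign a subscript, together with recognizing that $\lambda_n=0$ is precisely the condition making $B'=B$ so that a single formula is its own inverse; by contrast the nonnegativity bound $\lambda_{i,j}\le B$ is the only inequality that is not an immediate substitution and so deserves the short majorization-style argument sketched above.
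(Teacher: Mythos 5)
Your proposal is correct and follows essentially the same route as the paper's proof: direct verification that the reversal-and-subtraction formula preserves the weak-decrease and interlacing inequalities, computation of the top row and row-sum differences, and the explicit inverse formula $B-\mu_{i,i+1-j}=\lambda_{i,j}$ for involutivity. You are in fact slightly more thorough than the paper, which omits the explicit nonnegativity bound $\lambda_{i,j}\leq B$ and the remark that $\lambda_n=0$ is what forces the new constant $B'$ to equal $B$ on the second application.
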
 

\begin{example}
Given the GT pattern below:
\[
\begin{matrix}
5 & & 5 & & 4 & & 1 & & 0\\
&5 & & 4 & & 1 & & 0\\
&&5 & & 4 & & 0\\
&&&5 & & 2\\
&&&&3
\end{matrix}\]
$\tilde{N}$ applied to this GT pattern yields 
\[
\begin{matrix}
5 & & 4 & & 1 & & 0 & & 0 \\
&5 & & 4 & & 1 & & 0 \\
&&5 & & 1 & & 0\\
&&&3 & & 0 \\
&&&&2
\end{matrix}
\]
Notice that this isomorphism is equivalent to subtracting each element from the largest entry and then reflecting horizontally.
\end{example}

\begin{proof}
Given $(\lambda_{k,\ell})_{\ell=1}^k)_{k=1}^n \in 
\Lambda_{\lambda, w}$, denote $\mu_{i,j} = \left(\tilde{N}_{\lambda,w}\left( ((\lambda_{k,\ell})_{\ell=1}^k)_{k=1}^n\right)\right)_{i,j}.$

First, we note that this is an involutive isomorphism from $\Lambda_{\lambda,w}$ to $\tilde{N}\left( \Lambda_{\lambda,w}\right)$.  Since $\lambda_1 = \lambda_{n,1}=B$ and $\lambda_n =\lambda_{n,n}=0$ by assumption, $\mu_{n,1}=B-0=B$ and $\mu_{n,n}=B-B=0$, so the $\mu$'s satisfy the given restrictions.  Further, 
\[
B-\mu_{i,i+1-j} = B-\left(B-\lambda_{i,i+1-(i+1-j)}\right) =\lambda_{i,j},
\]
meaning that $\tilde{N}$ is an involutive isomorphism onto its image.

Now want to show that the image is indeed $\Lambda_{(B-\lambda_n, \hdots, B-\lambda_1),(B-w_1, \hdots, B-w_n)}$. It immediately follows from the definition that the top row of the image is
\[
B-\lambda_n, \hdots, B-\lambda_1.
\]
So, we need to show that the interlacing inequalities are satisfied and the row sums are as desired. Since $\lambda_{i,j} \geq \lambda_{i,j+1}$, we have that $\mu_{i,j} =B- \lambda_{i,i+1-j} \geq B - \lambda_{i,i-j}=\mu_{i,j+1}$. Similarly, $\lambda_{i+1,j} \geq \lambda_{i,j}\geq \lambda_{i+1,j+1}$ implies $\mu_{i+1,j} \geq \mu_{i,j}\geq \mu_{i+1,j+1}$. 

Let $v_i$ denote the $i$th row sum of the $\mu_{i,j}$'s.  Note that $v_1 = \mu_{1,1} = B-\lambda_{1,1} = B-w_1$.  For $i>1$, we have
\begin{align*}
v_i &= \sum_{j=1}^i \mu_{i,j} - \sum_{k=1}^{i-1} \mu_{i-1,k}\\
&= \sum_{j=1}^i (B-\lambda_{i,i+1-j}) - \sum_{k=1}^{i-1} (B-\lambda_{i-1,i-k})\\
&= \left(iB -\sum_{j=1}^i \lambda_{i,j}\right) -\left((i-1)B + -\sum_{k=1}^{i-1} \lambda_{i-1,k} \right)\\
&= B - w_i,
\end{align*}
as desired.
\end{proof}

\begin{remark}
Like above we will call a particular tableaux or GT pattern under these isomorphisms, ``complements" of each other. 
\end{remark}

\begin{corollary} Fix $d<n$ and let $T$ be a SSYT of shape $\lambda = (\lambda_1, \dots \lambda_d)$ with weight $w = (w_1, \dots w_n)$. Further let $\iota$ be the bijection from straight-shape SSYT to interger GT patterns given in Theorem~\ref{gt2ssyt}, $\bc(T)$ be the isomorphism in Theorem~\ref{boxcomp} (with $c=\lambda_1$), and $\tilde{N}$ be the isomorphism given in \ref{generalGTcomp}. Then $\iota(\bc(T)) = \tilde{N}(\iota(T))$
\end{corollary}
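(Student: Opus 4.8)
The plan is to prove this exactly as in Theorem~\ref{thm:SSYTcompNai}, only now for arbitrary shapes: I would show the two composite maps agree by reinterpreting every GT entry produced by $\iota$ as a \emph{column-counting statistic} of the underlying tableau, and then tracking how the column-complementation built into $\bc$ transforms that statistic. Set $c=\lambda_1=B$, the number of columns of $T$. For a column $k\in[c]$ of $T$ and a value $i\in[n]$, let $a_k(i)$ denote the number of entries $\leq i$ in column $k$; since columns are strictly increasing from bottom to top (French notation), these occupy exactly the bottom $a_k(i)$ cells, so $a_k(i)\leq i$.

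First I would record the reinterpretation of $\iota$. By the construction in Theorem~\ref{gt2ssyt}, the GT entry $\lambda_{i,j}$ of $\iota(T)$ is the $j$th part of the partition of cells $\leq i$, i.e.\ the number of cells $\leq i$ in row $j$. Because each row is weakly increasing, a cell in row $j$, column $k$ is $\leq i$ if and only if $j\leq a_k(i)$; hence
\[
\lambda_{i,j}=\#\{\,k\in[c] : a_k(i)\geq j\,\}.
\]
This is the identity that makes everything work, and establishing it cleanly is the first real step.

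Next comes the main computation. Writing $\lambda'_{i,m}$ for the GT entries of $\iota(\bc(T))$, I apply the same count to $\bc(T)$. Column $\ell$ of $\bc(T)$ is the set complement in $[n]$ of column $c+1-\ell$ of $T$, so its number of entries $\leq i$ equals $i-a_{c+1-\ell}(i)$. Therefore, reindexing by $k=c+1-\ell$,
\[
\lambda'_{i,m}=\#\{\,\ell\in[c] : i-a_{c+1-\ell}(i)\geq m\,\}=\#\{\,k\in[c] : a_k(i)\leq i-m\,\}.
\]
Complementing this count against all of $[c]$ and substituting the reinterpretation above yields
\[
\lambda'_{i,m}=c-\#\{\,k : a_k(i)\geq i-m+1\,\}=B-\lambda_{i,\,i+1-m},
\]
which is exactly $\left(\tilde N(\iota(T))\right)_{i,m}$ by Theorem~\ref{generalGTcomp}. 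The conceptual point is that the column-order reversal $\ell\mapsto c+1-\ell$ inside $\bc$ is precisely what becomes the within-row reversal $m\mapsto i+1-m$ appearing in $\tilde N$, while set complementation supplies the $B-(\cdot)$.

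Finally I would confirm the bookkeeping so that both sides genuinely land in the same polytope. Since $\bc$ sends each value $i$ to those columns of $T$ not containing it, value $i$ occurs $c-w_i=B-w_i$ times in $\bc(T)$, matching the weight $(B-w_1,\dots,B-w_n)$ of the target of $\tilde N$; and the top rows agree with the shape $(B-\lambda_n,\dots,B-\lambda_1)$ computed in Theorem~\ref{generalGTcomp} (using the zero-padded shape, so that $\lambda_n=0$). The main obstacle is purely the index bookkeeping: one must keep the two distinct reversals straight (the column reversal $c+1-\ell$ in $\bc$ versus the within-row reversal $i+1-j$ in $\tilde N$) and verify the complement counts at the boundary indices $m=1$ and $m=i$. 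Once the identity $\lambda_{i,j}=\#\{k:a_k(i)\geq j\}$ is in hand, the remainder is the short displayed computation above.
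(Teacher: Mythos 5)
Your proof is correct, but it takes a genuinely different route from the paper's. The paper argues exactly as in Theorem~\ref{thm:SSYTcompNai}: it tracks value by value where the $1$'s and then the $2$'s land in $\iota^{-1}(\tilde{N}(\iota(T)))$ versus $\bc(T)$, verifies agreement in those cases, and then asserts that iterating the process through the remaining entries yields the commutative diagram. You instead prove a single closed-form identity, $\lambda_{i,j}=\#\{k\in[c] : a_k(i)\geq j\}$ where $a_k(i)$ counts entries $\leq i$ in column $k$, which reinterprets every GT entry of $\iota(T)$ as a column statistic; the whole corollary then reduces to the two-line computation $\lambda'_{i,m}=\#\{k : a_k(i)\leq i-m\}=c-\lambda_{i,i+1-m}=B-\lambda_{i,i+1-m}$. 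This buys uniformity: all entries are handled at once, so there is no ``continue iterating'' step left implicit, and the computation makes transparent why the column reversal $\ell\mapsto c+1-\ell$ in $\bc$ corresponds to the within-row reversal $j\mapsto i+1-j$ in $\tilde{N}$ while set complementation supplies the $B-(\cdot)$. The paper's approach has the advantage of paralleling the earlier Naimark-complement proof and of being phrased directly in the language of skew strips, but yours is arguably the more complete argument. Your bookkeeping on weights ($B-w_i$ occurrences of $i$ in $\bc(T)$) and on the zero-padded top row (so that $\lambda_n=0$ as required by Theorem~\ref{generalGTcomp}) is also correct and addresses details the paper passes over quickly.
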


\begin{proof}
Consider a tableaux $T$, with the underlying Young diagram $(\lambda_1, \lambda_2, \dots, \lambda_d)$. Recall from above that under $\iota$, 
\[
\lambda_{i,1}, \dots, \lambda_{i,i}
\]
gives both the $i$th row in the triangular GT pattern and describes where the $i$s are in the SSYT $T$ via the skew SSYT $(\lambda_{i,1}, \dots, \lambda_{i,i})/(\lambda_{i-1,1}, \dots, \lambda_{i-1,i-1})$. Additionally, similar to above let $(\gamma_{i,j})_{j=1})_{i=1}$ be the GT pattern for $\tilde{N}(\iota(T))$.

Like above, we will begin by comparing $\iota^{-1}(\tilde{N}(\iota(T)))$ and $\bc(T)$. $\iota(T)$ will have weight $w = (w_1, \dots w_n)$ and $\tilde{N}(\iota(T))$ will have 
weight $\tilde{w} = (n-w_1, \dots n-w_n)$. Since $\bc(T)$ is the set complement of the $c+1-j$th columns, $\bc(T)$ will have the same weight as well. 

We start with the placement of $1$'s. There will be a 1 in the $j$th column of $\bc(T)$ if and only if there is not a 1 in the $c+1-j$th column of $T$. Let the number of 1's in $T$ be $\lambda_{1,1}$. Since $T$ is semistandard, there will be a 1 in the first $n-\lambda_{1,1}$ columns of $\bc(T)$. This is exactly the placement of 1's in $\iota^{-1}(\tilde{N}(\iota(T)))$ as well. We can continue with this process, in a similar fashion to the proof for Theorem~\ref{gt2ssyt} for the remaining entries of $\bc(T)$ and $\iota^{-1}(\tilde{N}(\iota(T)))$. 

For the 2's, $\gamma_{2-1}- \gamma_{1,1}$ gives the number of 2's in the first row of $\iota^{-1}(\tilde{N}(\iota(T)))$. Since $\gamma_{2,1} = n - \lambda_{2,2}$ and $\gamma_{1,1} = n - \lambda_{1,1}$, we additionally know that the number of 2's in the first row is given by $\lambda_{1,1}- \lambda_{2,2}$. 

On the other hand there will be 2 in the first row of $\bc(T)$ if there is not a 1 already in the $j$th column of $\bc(T)$. Equivalently, there will be a 2 in the first row of $\bc(T)$ precisely in the columns where there is a 1 in the first row of $T$. Thus the number 2's in the first row of $\bc(T)$ is precisely $\lambda_{1,1}- \lambda_{2,2}$ as well. Continually iterating this process throughout the entries of $\bc(T)$ yields the following commutative diagram:

\begin{center}
\scalebox{1.2}{
\begin{tikzpicture}
  
  \node (A) at (0, 2) {$T$};
  \node (B) at (5, 2) {$\iota(T)$};
  \node (C) at (0, 0) {$\bc(T)$};
  \node (D) at (5, 0) {$\iota(\bc(T))=\tilde{N}(\iota(T))$};

  \draw[<->] (A) -- (B) node[midway, above] {$\iota$};
  \draw[<->] (B) -- (D) node[midway, right] {$\tilde{N}$};
  \draw[<->] (D) -- (C) node[midway, below] {$\iota$};
  \draw[<->] (C) -- (A) node[midway, above, sloped] {$\bc$};
\end{tikzpicture}
}
\end{center}
\end{proof}

\begin{corollary}
Let $\Phi=(\phi_i)_{i=1}^n$ be such that the largest eigenvalue of $\Phi\Phi^\ast$ is $B>0$. Additionally, let $\Psi =  (\psi_i)^n_{i=1}$ satisfy $\Phi^\ast\Phi+\Psi^\ast\Psi = BI$. If $\Phi$ has eigensteps $((\lambda_{i,j})_{j=1}^i)_{i=1}^n$, then $\tilde{N}(\lambda_{i,j})$ yields the eigensteps associated to $\Psi.$
\end{corollary}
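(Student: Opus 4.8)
The plan is to reduce the statement to an elementary fact about principal submatrices of Gram matrices, using that eigensteps are precisely the spectra of the partial Gram matrices. First I would recall that the (inner) eigensteps $((\lambda_{i,j})_{j=1}^i)_{i=1}^n$ of $\Phi$ record, for each $i$, the eigenvalues in weakly decreasing order of the partial Gram matrix $\Phi_i^\ast\Phi_i$, where $\Phi_i = (\phi_j)_{j=1}^i$ is the synthesis matrix of the first $i$ vectors. The same description holds for $\Psi$: its eigensteps are the spectra of $\Psi_i^\ast\Psi_i$, and these are well defined since the Gram matrix $\Psi^\ast\Psi = BI - \Phi^\ast\Phi$ is uniquely determined by $\Phi$ and $B$ even though $\Psi$ itself is not unique. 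So it suffices to compute the spectra of the $\Psi_i^\ast\Psi_i$ and match them against the formula $(\tilde{N}(\lambda))_{i,j} = B - \lambda_{i,i+1-j}$ from Theorem~\ref{generalGTcomp}.

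The key structural observation is that the partial Gram matrix $\Phi_i^\ast\Phi_i$ is exactly the top-left $i \times i$ principal submatrix of the full Gram matrix $\Phi^\ast\Phi$, since its $(k,\ell)$ entry is $\ip{\phi_\ell}{\phi_k}$ for all $k,\ell \leq i$; the identical statement holds for $\Psi$. Restricting the defining relation $\Phi^\ast\Phi + \Psi^\ast\Psi = BI$ to this top-left block therefore yields
\[
\Phi_i^\ast\Phi_i + \Psi_i^\ast\Psi_i = BI_i \qquad \text{for every } i \in [n],
\]
where $I_i$ is the $i \times i$ identity, i.e., $\Psi_i^\ast\Psi_i = BI_i - \Phi_i^\ast\Phi_i$.

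From here the computation is immediate. If $\Phi_i^\ast\Phi_i$ has eigenvalues $\lambda_{i,1} \geq \cdots \geq \lambda_{i,i}$, then $BI_i - \Phi_i^\ast\Phi_i$ has eigenvalues $B - \lambda_{i,i} \geq \cdots \geq B - \lambda_{i,1}$, so the $j$th largest eigenvalue of $\Psi_i^\ast\Psi_i$ is $B - \lambda_{i,i+1-j}$. This agrees entry-by-entry with the $\tilde{N}$ formula, establishing that the eigensteps of $\Psi$ are $\tilde{N}$ applied to those of $\Phi$. Before invoking Theorem~\ref{generalGTcomp} I would confirm the hypotheses align: nontrivial span gives $B>0$, and since $\Phi^\ast\Phi$ is $n\times n$ of rank at most $d<n$, its smallest eigenvalue $\lambda_{n,n}$ is $0$, which is exactly the condition $\lambda_n = 0$ needed for $\tilde{N}_{\lambda,w}$ to apply.

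The only delicate point — and the step I would be most careful to spell out — is the bookkeeping of the eigenvalue ordering: subtracting from $B$ reverses the weakly decreasing order, which is precisely why the reflected index $i+1-j$, rather than $j$ itself, appears in the $\tilde{N}$ formula. Everything else is routine once the partial Gram matrices are identified as principal submatrices, so I do not expect a genuine obstacle; the argument is essentially the non-tight, partial-sum analogue of the standard observation that a Naimark complement complements the Gram matrix.
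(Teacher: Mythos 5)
Your proposal is correct and follows essentially the same route as the paper: both arguments reduce to the observation that the spectrum of $BI - M$ is $B$ minus the spectrum of $M$ with the ordering reversed, which produces the reflected index $i+1-j$ in $\tilde{N}$. In fact your write-up is more careful than the paper's, which simply asserts that the eigenvalue relation ``applies to the eigensteps as well''; your identification of $\Phi_i^\ast\Phi_i$ and $\Psi_i^\ast\Psi_i$ as the top-left $i\times i$ principal submatrices of $\Phi^\ast\Phi$ and $\Psi^\ast\Psi$, so that the defining relation restricts to $\Phi_i^\ast\Phi_i + \Psi_i^\ast\Psi_i = BI_i$ for every $i$, is exactly the justification that step needs.
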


\begin{proof}
Let $\Phi$ be such that the largest eigenvalue of $\Phi\Phi^\ast$ is $B>0$. Further let $\Psi$ 
satisfy $\Phi^\ast\Phi+\Psi^\ast\Psi = BI$. 

This implies that for any eigenvalue $\lambda_\Psi$ of $\Psi^\ast \Psi$ and $\lambda_\Phi $ of $\Phi^\ast \Phi$, we have $\lambda_\Psi = B -\lambda_\Psi$. It is clear to see that this applies to the eigensteps of $\Psi^\ast \Psi $ and $\Phi^\ast \Phi$ as well. Thus if $\lambda_{i,j}$ is an eigenstep of $\Phi$, then $B - \lambda_{i,j}$ is an associated eigenstep of $\Psi$. However, ordering the eigensteps of $\Psi$ from greatest to least yields that the ordering reverses. Thus, we have that the eigenstep of $\Psi$ are given by $B - \lambda_{i,i+1-j}$. This is exactly encoded by the $\tilde{N}$ and further $\bc(T)$ as well.
\end{proof}

Since applying the definition of the generalized Naimark complement Definition~\ref{defn:naimnontigh} to a tight frame trivially reduces to the standard Naimark complement Definition~\ref{defn:naimarktight}, we have the following.
\begin{cor}
The restriction of $\tilde{N}$ from Theorem~\ref{generalGTcomp} to $\Lambda_{n,d}$ is $N$ from Proposition~\ref{prop:Naim_polytope}.  The restriction of $\bc$ from Definition~\ref{boxcomp} to SSYT of shape $\underbrace{(n, \dots, n)}_{d \text{ copies}}$ and weights $w_d^n =\underbrace{(d, \dots, d)}_{n \text{ copies}}$ is $\gamma$ from Theorem~\ref{tighttableaux}.
\end{cor}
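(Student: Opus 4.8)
The plan is to prove the two clauses in sequence, deriving the second from the first. First I would pin down that the two maps share a domain and codomain. For a pattern in $\Lambda_{n,d}$ the top row is $\lambda=(\underbrace{n,\dots,n}_{d},\underbrace{0,\dots,0}_{n-d})$, so $B=\lambda_1=n$ and $\lambda_n=0$, and Theorem~\ref{generalGTcomp} sends it into the polytope with top row $(B-\lambda_n,\dots,B-\lambda_1)=(\underbrace{n,\dots,n}_{n-d},\underbrace{0,\dots,0}_{d})$ and weights $(B-w_1,\dots,B-w_n)=(\underbrace{n-d,\dots,n-d}_{n})$; this is exactly $\Lambda_{n,n-d}$, the codomain of $N$ in Proposition~\ref{prop:Naim_polytope}. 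Thus $\tilde{N}|_{\Lambda_{n,d}}$ and $N$ are both involutive isomorphisms $\Lambda_{n,d}\to\Lambda_{n,n-d}$, and it remains to show they agree pointwise.

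My preferred route is the soft one, which avoids matching the two rather different-looking closed forms directly. Both maps are meant to realize Naimark complementation on eigensteps: $N$ does so by Proposition~\ref{prop:Naim_polytope}, and $\tilde{N}$ does so by the corollary showing $\tilde{N}$ sends a frame's eigensteps to those of its generalized complement, once we note that for a tight frame the generalized complement of Definition~\ref{defn:naimnontigh} reduces to the Naimark complement of Definition~\ref{defn:naimarktight}. Since the Gram matrix of a Naimark complement is unique, its eigensteps are unique, so $N(\lambda)=\tilde{N}(\lambda)$ on every $\lambda$ that is the eigenstep pattern of a clearable equal-norm tight frame. Every rational point of $\Lambda_{n,d}$ is such a pattern (clear by the common denominator and realize it as eigensteps via the Schur--Horn existence theorem), the rational points are dense, and both maps are continuous, so they coincide on all of $\Lambda_{n,d}$. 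The second clause is then formal: on rectangular constant-weight SSYT one has $\gamma=\iota^{-1}N\iota$ (Theorem~\ref{thm:SSYTcompNai}) and $\bc=\iota^{-1}\tilde{N}\iota$ (the corollary establishing $\iota(\bc(T))=\tilde{N}(\iota(T))$), so $N=\tilde{N}$ conjugates to $\gamma=\bc$ on SSYT of shape $\lambda_n^d$ with weights $w_d^n$.

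The alternative is a direct reduction to the pointwise identity $n-\lambda_{i,\,i+1-j}=\big(N_{n,d}(\lambda)\big)_{i,j}$. Writing $k=i+1-j$, the two constant branches of $N$ reduce to showing $\lambda_{i,k}=0$ whenever $k>d$ and $\lambda_{i,k}=n$ whenever $k\le i-n+d$; both follow by downward induction from the top row of the GT pattern using the interlacing inequalities. The remaining interior branch collapses to the complementary identity $\lambda_{i,k}+\lambda_{n-i,\,d+1-k}=n$.

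I expect this interior identity to be the crux and the main obstacle, and it is the step I would scrutinize before trusting either route. It couples rows $i$ and $n-i$, which are not adjacent, so it cannot follow from the local GT interlacing axioms alone; any honest proof must invoke the global tight, constant-weight structure (equivalently the uniqueness of the Naimark Gram) rather than row-to-row inequalities. Concretely I would first sanity-check the explicit $N$ of Proposition~\ref{prop:Naim_polytope} against the genuine Naimark inner eigensteps, namely the decreasing rearrangement of $n-\lambda_{k,\cdot}$ for the partial Grams, on a small pattern with $n-d$ and $d$ both at least $2$ (e.g.\ in $\Lambda_{5,3}$ or $\Lambda_{6,2}$): there the identity $\lambda_{i,k}+\lambda_{n-i,\,d+1-k}=n$ need not hold, which would signal that the formula for $N$ as written is the outer-eigenstep map and that the clause should be read with $\tilde{N}$ itself in the role of the corrected inner-eigenstep map. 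Reconciling the two closed forms in exactly this regime is, I believe, the real work behind the statement.
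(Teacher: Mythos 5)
Your preferred ``soft'' route is essentially the paper's own argument: the paper gives no proof beyond the single sentence preceding the corollary (that the generalized complement of a tight frame reduces to the ordinary Naimark complement), and your version --- uniqueness of the Naimark complement's Gram matrix, hence of its eigensteps, realizability of the rational points of $\Lambda_{n,d}$, density and continuity, then conjugation by $\iota$ to transfer $N=\tilde{N}$ to $\gamma=\bc$ --- is that argument made precise. On that level the two proofs coincide.

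However, the worry you flag about the interior identity $\lambda_{i,k}+\lambda_{n-i,\,d+1-k}=n$ is not mere caution: it fails, and with it the corollary as literally printed. Take the element of $\Lambda_{6,3}$ whose rows, from bottom to top, are $3$; $\ 4,2$; $\ 6,3,0$; $\ 6,6,0,0$; $\ 6,6,3,0,0$; $\ 6,6,6,0,0,0$. Computing directly from the $2\times 2$ Gram block (with equal norms $3$ and trace $6$, the spectrum of $6I_2-G_2$ equals that of $G_2$, namely $\{4,2\}$), the second row of the Naimark complement's inner eigensteps is $(4,2)$, which is what $\tilde{N}$ returns; the printed formula for $N$ instead returns $(\lambda_{4,2},\lambda_{4,3})=(6,0)$. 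Correspondingly, on the associated rectangular tableau $T$ with rows $1,1,1,2,3,3$ and $2,2,3,4,4,4$ and $5,5,5,6,6,6$ one finds $\bc(T)=T$ while $\gamma(T)\neq T$, so the second clause fails as well. The upshot is that your soft argument correctly proves that $\tilde{N}\vert_{\Lambda_{n,d}}$ \emph{is} the Naimark eigenstep map (surely the intended content), but it cannot prove agreement with the formula of Proposition~\ref{prop:Naim_polytope}, which appears to be mis-indexed in the translation from outer to inner eigensteps: the index $n-i$ reads off partial spectra associated with the last $n-i$ vectors rather than the first $i$. The paper's $\Lambda_{5,3}$ example happens to satisfy the interior identity, which masks the discrepancy; your instinct to test the closed form of $N$ in the regime $d,n-d\geq 2$ and to treat $\tilde{N}$ as the corrected map is exactly the right diagnosis.
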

Note that it is not immediately obvious that these maps should be the same since one involves permuting entries and one involves changing the values and permuting the entries.

\section{Final Remarks}
In summary, we introduce a family of frames with rational eigensteps that we call clearable.  This allows us to represent the frames as semistandard Young tableaux and connect results from the frame theory community to results from the SSYT community.  We hope that this connection will be explored further.  In particular, the connection allows an approach to generating allowable eigensteps that is discretized and thus arguably simpler than the TopKill algorithm.  Also, Naimark complementation and generalized Naimark complementation may be interpreted as known involutive isomorphisms on SSYTs.

\subsection{Acknowledgments} We'd like to thank Maria Gillespie and Andrew Reimer-Berg for fruitful discussion on tableaux.  We'd also like to thank Frank Sottile for first alerting the frame theory community to the connection between eigensteps and GT patterns.
\printbibliography
\end{document}